\documentclass[reqno]{amsart}
\usepackage[utf8]{inputenc}
\usepackage{enumitem}

\usepackage{amsmath,amssymb,amsbsy,amsfonts,amsthm,latexsym,amsopn,amstext,mathtools,leftidx,
            amsxtra,euscript,amscd,verbatim,epsf,colordvi,graphics}
\usepackage{ytableau}
\usepackage{color}
\usepackage{pgf,tikz}
\usetikzlibrary{decorations.pathreplacing}
\usetikzlibrary{arrows}
\usepackage{mathrsfs}
\usepackage{graphicx}

\usepackage{hyperref}

\usepackage{cleveref}
\usepackage[margin=11pt,font=footnotesize,labelfont=bf]{caption}
\newtheoremstyle{tplain}{3pt}{3pt}{\rmfamily}{}{\bfseries}{.}{0.5em}{}
\theoremstyle{tplain}

\usepackage{pgf,tikz}
\usepackage{tcolorbox}
\usepackage[enableskew]{youngtab}
\definecolor{darkgreen}{cmyk}{1,0,1,0}
\newtheorem{thm}{Theorem}
\newtheorem{lem}{Lemma}
\newtheorem{ex}{Example}
\newtheorem{cor}{Corollary}
\newtheorem{prop}{Proposition}
\newtheorem{obs}{Remark}
\newtheorem{defi}{Definition}
\def \cal{\mathcal}
\def \rm {\mathrm}
\def \mbf {\mathbf}

\def\rev{{\rm rev}}
\def\Z { \mathbb{Z}}
\def\C { \mathbb{C}}
\def\LRAII {\mathsf{LR}}
\def \r {\mathbf{r}}
\def \t {\mathbf{t}}
\def \redu {\mathrm{red}}
\def\c { \mathrm{c}}
\def \k {\mathfrak{k}}
\def\wt { \mathrm{wt}}

\newcommand{\oo}{\color{blue}}
\newcommand{\blue}{\color{blue}}
\newcommand{\red}{\color{red}}
\newcommand{\green}{\color{green}}

\makeatletter
\newcommand*\bigcdot{\mathpalette\bigcdot@{.5}}
\newcommand*\bigcdot@[2]{\mathbin{\vcenter{\hbox{\scalebox{#2}{$\m@th#1\bullet$}}}}}
\makeatother

\newcommand{\YT}[3]{
\vcenter{\hbox{
\begin{tikzpicture}[x={(0in,-#1)},y={(#1,0in)}] 
\foreach \rowi [count=\i] in {#3} {
 \foreach \e [count=\j] in \rowi {
  \draw (\i,\j) rectangle +(-1,-1);
  \draw (\i-0.5,\j-0.5) node {$#2\e$};
 }
}
\end{tikzpicture}
}}
}

\title[$\mathfrak{k}$- pairs] {
The slack data of the recording tableaux  in the quantum Littlewood-Richardson map determines its inverse:\\  some applications }

\author{Olga Azenhas}
\address{ University of Coimbra, CMUC, Department of Mathematics, Portugal}
\email{oazenhas@mat.uc.pt}

\keywords{   quantum Littlewood-Richardson map, recording tableaux, LR-Sundaram tableaux}

\subjclass[2000]{05E05, 05E10, 05E14, 17B37, 68Q17}

\begin{document}

\begin{abstract}
{ We introduce the slack of a recording tableau in the quantum Littlewood-Richardson (LR) map and show that it inherits the needed data from LR-Sundaram tableaux to define the inverse of the quantum LR map. Notably this enriched slack information packs the suitable reverse Schensted column insertion routes to compute the inverse. The slack  data  is then applied to $\mathfrak{k}$-highest weight symplectic tableaux.}
\end{abstract}
\maketitle

\tableofcontents

\section{Introduction}
This note is a follow-up of the author's paper \cite{azreco} on the recording tableaux of the quantum LR map by Watanabe \cite{watanabe} and its role (by analogy of the G. Thomas \cite{tho78} LR bijection) on the computation of  $\mathfrak{k}$-highest or lowest weight tableaux in a recent
 proof of the Naito–Sagaki conjecture via the branching rule for $\imath$quantum groups \cite{nsw}.

 Our goal with this manuscript  is to surface in a conceptual way the dual information hidden on the recording tableaux in the quantum LR map to exhibit the inverse of the quantum LR map. We look at  the recording tableaux through its decomposition into vertical strips (uniquely defined by a quantum LR recording tableau )  and read  the gaps in the disconnected vertical strips as dual information to define the inverse of the quantum LR map. It turns out that disconnected vertical strips are the interesting ones and we  call to their gaps, \emph{slacks}. We then organize that slack data through slack numbers by counting the gaps, slack vectors defined by the row gap coordinates,  and  associate incidence   slack vectors (see Subsection \ref{subsec:1verticalstrip} and  Definition \ref{def:tei}). Then we show that these tools  inherit the needed information of LR Sundaram tableaux via the quantum LR recording tableaux to guarantee the good reverse bumping routes to compute the inverse of the quantum LR map as proposed in \cite{azreco}.

  We work this in Section \ref{sec:inverse}. Our main results are Proposition \ref{prop:slack}, Theorem \ref{cor:generalhole1} and Corollary \ref{lem:H-L}. In Section \ref{sec:kk}, Example \ref{ex:n=3}, we exhibit several patterns for $\k$-height weight tableaux when $n=3$. The exploration of the case $n=3$ reveals interesting properties on the generation of $\k$-highest weights that should be further studied.

  Section \ref{prel} collects previous information in \cite{watanabe} and \cite{azreco} needed for this follow-up. This manuscript is updated according to the results in the recent preprint by the author  \cite{azreduction}.

\section*{Acknowledgements}
The author acknowledges financial support by the Centre for Mathematics of the University of Coimbra (CMUC, https://doi.org/10.54499/UID/00324/2025) under the Portuguese Foundation for Science and Technology (FCT), Grants UID/00324/2025 and UID/PRR/00324/2025.

\section{Preliminaries}\label{prel}
We follow the notation in \cite{azreco} and \cite{watanabe} and in this section we collect needed information from those sources.
 A \emph{partition} $\gamma$ is a weakly decreasing sequence of nonnegative integers $\gamma_1\ge \gamma_2 \ge \cdots$
such that $\gamma_k =0$ for some $k \ge 1$. The   maximal $i$ such that $\gamma_i> 0$ is called the \emph{number}
of \emph{parts} or \emph{length} of $\gamma$, denoted $\ell(\gamma)$. For each $m\ge 0$, the set of partitions of length at most $m$ is denoted by \emph{$Par_{\le m}$}. We assume the inclusion $Par_{\le m}\subseteq Par_{\le k}$ whenever $k\ge m$. Thus we often write the partition $\gamma$ as a vector
 $\gamma = (\gamma_1, \gamma_2, \dots,\gamma_k)$ for $k \ge  \ell(\gamma)$. The empty partition is the empty sequence $()$ and is regarded as the unique partition of length zero.  The partition $\gamma$ is said to be \emph{even} if $\gamma_{2i-1}=\gamma_{2i}$ for all $i\ge 1$. In other words, all columns of $\gamma$ have even length and necessarily the length of $\gamma$ is even. Given $m\in\Z_{\ge 0}$, $\varpi_m$ denotes the partition of length $m$ whose parts are all $1$, that is, $\varpi_m =(\underbrace{1,\dots,1}_{m})=:(1^m)$.

 A partition $\gamma$ is identified
with its \emph{Young diagram} $D(\gamma)$ which is a left and top justified collection of boxes (or cells)
with $\gamma_k$ many boxes in the $k$th row for all $k \in \mathbb{Z}_> 0$. In particular, the empty Young diagram and  the  partition $()$ are identified. The number of cells of $D(\gamma)$ is  the sum of the parts of $\gamma$ and is denoted by $|\gamma|$.
The boxes or cells of the Young diagram of $\gamma$ are identified by its  coordinates $(i,j)$ in the matrix style, that is, $1\le i\le \ell(\gamma)$ and $1\le j\le \gamma_i$.

Let $\gamma$, $\mu$  partitions   with $\mu \subseteq \gamma$, that is,  $\mu_i\le  \gamma_i$ for all $ i\in\mathbb{Z}_>0 $, or the Young diagram of $\mu$ is a subset of
the Young diagram of $\gamma$. A \emph{
tableau} $T$ of (skew) shape $\gamma/\mu$ is a map (or a filling of $D(\gamma)$)
 $$T:D(\gamma)\rightarrow \mathbb{Z}_\ge0, \; (i,j)\mapsto T(i,j),$$
assigning a positive integer to each box of $\gamma\setminus\mu$ and   $0$  to the boxes corresponding to $\mu$. Denote by $\rm{Tab}(\gamma/\mu)$ the set of tableaux of shape $\gamma/\mu$.  We say that the tableau $T$ is \emph{semi-standard} if an addition the assignment is
such that it is weakly increasing as we go from left to right along a row and strictly
increasing as we go from top to bottom along a column excluding the boxes in $\mu$,
\begin{align}&T(i,j)\le T(i,j+1),\; T(i,j)<T(i+1,j), \mbox{ for all $(i,j)\in D(\gamma)\setminus D(\mu)$,}\nonumber\\
&\mbox{ and }  T(i,j)=0, \mbox{ for $(i,j)\in  D(\mu)$,}
\nonumber
\end{align}
where we set $T(a,b):=\infty$ if $(a, b) \notin D(\gamma)$.
 Usually $T(i,j)$ is just referred as
the entry in the box $(i,j)$ and we omit the zeroes  in the boxes of $\mu$. A positive integer
$m\ge \ell(\gamma)$ will be fixed and $[0, m]:=\{0, 1, \dots, m\}$ will be used as a co-domain for the map $T$. We call
$[m]:=\{1, \dots, m\}$ the alphabet of the semi-standard tableau $T$.
 In this case, we will denote the \emph{set} of \emph{semi-standard} \emph{tableaux} of \emph{shape} $\gamma/\mu$ by
$SST_k(\gamma/\mu)$. When $\mu=()$, we just write $SST_m(\gamma)$. The \emph{weight} or \emph{content} of $T$ is the nonnegative vector $\mathrm{wt}(T)=(T[1],\dots,  T[m])$, where $T[i]:=\#\{(a,b)\in D(\gamma):T(a,b)=i\}$ for $i\in[m]$, that is, $T[i]$ is  the number of occurrences of $i$ in the tableau $T$.
The \emph{reverse} \emph{row} \emph{word} of a semi-standard tableau
$T$, denoted $w(T)=w_1\cdots w_l$, with $l$ the number of non zero entries of $T$,  is obtained by reading the entries of its rows (excluding the entry 0) right to left
starting from the top row and proceeding downwards. The \emph{column} \emph{word} of a semi-standard tableau $T$ denoted $w^{col}(T)=w'_1\cdots w_l'$  is the sequence of entries obtained by  reading the entries (excluding the entry $0$) of its columns
 from bottom to top and left to right.
The \emph{reverse} \emph{column} \emph{word}  of $T$, $w'_l\cdots w'_1$.
is obtained by reading the entries of its columns (excluding the entry 0) right to left
starting from the top and proceeding downwards.
The \emph{weight} of the \emph{word}  $w(T)$ is the weight of $T$.

 A \emph{Yamanouchi} \emph{word} is a word $u_1 \cdots u_l$ such
that, for each $1 \le k \le l$,  the  weight of the subword  $u_1 \cdots u_k$ is a partition.
 A Yamanouchi tableau of shape $\mu$ is a semi-standard tableau of shape $\mu$ whose word is Yamanouchi, equivalently $Y(i,j)=i$ for all $1\le i\le \ell(\mu)$ and $1\le j\le\mu_i$. A Yamanouchi tableau of shape $\mu$ is the unique semi-standard tableau whose weight is $\mu$.

For $\lambda\subseteq\gamma$ partitions, we write $\lambda\subset_{vert} \gamma$ to mean that  $\gamma/\lambda$  is a \emph{vertical} \emph{strip}, that is, the skew-diagram $\gamma/delta$  at most one cell in each row. The \emph{size} of $\gamma/\lambda$ is $|\gamma/\lambda|:=|\gamma|-|\lambda|$ the number of its cells.

\subsection{LR-Sundaram tableaux and orthogonal  transposition symmetry} Let $n\in\mathbb{N}$. The set  of semistandard Young tableaux  of shape $\lambda$ with entries in $[1, 2n] :=
\{1,\dots 2n\}$ is denoted by $SST_{2n}(\lambda)$ and necessarily $\lambda$ has at most $2n$ parts, $\lambda\in Par_{\le 2n}$.

Let $\mu\subseteq \lambda \in Par_{\le 2n}$. A tableau $T\in SST_{2n}(\lambda/\mu)$ is said to be a Littlewood-Richardson (LR) tableau \cite{LR} if its reverse column word
is a Yamanouchi word. This subset of $SST_{2n}(\lambda/\mu)$ consisting of LR tableaux is denoted by $LR_{2n}(\lambda/\mu)$.
A partition $\nu$ is said to be even if  all columns of $\nu$ have even length and necessarily the length of $\nu$ is even. In this case, $\nu^t$, the transpose or conjugate of $\nu$, has even parts.
A Littlewood-Richardson-Sundaram tableau or symplectic Littlewood-Richardson tableau \cite{sundaram} is a certain LR tableau as follows.
\begin{defi}\label{def:lrs} Let $\lambda \in Par_{\le 2n}$ and $\mu \in Par_{\le n}$. A semi-standard tableau $T \in SST_{2n}(\lambda/\mu)$  is said to be
an $n$-symplectic Littlewood-Richardson tableau or Littlewood-Richardson-Sundaram tableau if it satisfies the following:
\begin{enumerate}

\item [(1)]
The reverse column word of $T$
is a Yamanouchi word.

\item [$(2)$] The sequence $\rm{wt}(T) = (T[1], T[2],\dots, T[2n])$ is an even partition,

\item [$(3)$] $T(n + i, 1) \ge 2i$ for every $i\ge 0$.

\end{enumerate}
\end{defi}

For $\lambda \in Par_{\le 2n}$ and $\mu \in Par_{\le n}$, the subset of $SST_{2n}(\lambda/\mu)$ satisfying conditions $(1)$, $(2)$ and  $(3)$ in Definition \ref{def:lrs} is denoted by $LRS_{2n}(\lambda/\mu)\subseteq LR_{2n}(\lambda/\mu)$.

\begin{obs} Let $T\in LRS_{2n}(\lambda/\mu)$ of content $\nu$. Let $N:=\nu_1$ and $\nu^t=(\nu_1^t,\dots,\nu^t_{N})$ the transpose of $\nu$. Put  $\mu^{(0)}:=\lambda$ and $\mu^{(N)}:=\mu$. For $k=1,\dots,N$,
 define $\mu^{(k)}$ the shape obtained  by erasing from NE to SW in $T_{|\mu^{(k-1)}}$, $T$ restricted to the shape  $\mu^{(k-1)}$, the first rightmost $\nu^t_k$ cells filled with $1,2,\dots, \nu^t_k$. Hence $$\mu^{(0)}=\lambda\supset_{vert} \mu^{(1)}\supset_{vert} \cdots\supset_{vert}\mu^{(N)} =\mu$$
This decomposes the shape $\lambda/\mu$ into $N$ vertical strips
$$J_1:=\mu^{(0)}/\mu^{(1)},J_2:= \mu^{(1)}/\mu^{(2)}, J_i=\mu^{(i-1)}/\mu^i,\dots, J_{N}=\mu^{(N-1)}/\mu^{(N)}$$

 Then, for $k=1,\dots,N$,
the vertical strip $J_k=\mu^{(k-1)}/\mu^{(k)}$ has  $\nu^t_k=|\mu^{(k-1)}|-|\mu^{(k)}|$ cells and  is filled in  $T$, top to bottom,  with
$12\cdots \nu_k^t$. 

Concatenating those column words as $12\cdots \nu^t_{N}\bigcdot\cdots\bigcdot 12\bigcdot\cdots\bigcdot\nu^t_2\bigcdot 12\cdots \nu^t_{1}$ gives the reverse-column word of the  Yamanouchi tableau of shape $\nu$ which is also obtained by rectification of the word of $T$.

\end{obs}

Let $T\in LRS_{2n}(\lambda/\mu)$ of weight $\nu$ and $J_1,\dots,J_{N}$, with $N=\nu_1$, its decomposition into vertical strips each strip filled in $1,2,\dots, \nu^t_i\in 2\mathbb{Z}$ for $i=1,\dots,N$.
Relabel each vertical strip $J_i=\mu^{(i-1)}/\mu^{(i)}$ with $\nu^t_i$ $i$'s, for $i=1,\dots,N$. We get a new tableau $Q\in Tab(\lambda/\mu)$ 
of weight $\nu^t$. The  map on $LRS_{2n}(\lambda/\mu)$ returning such $Q$ is denoted by $\lozenge$ and is illustrated below.
\begin{ex} \label{ex:symmetry}Let $n=3$, $\lambda=(4,3,2,2,1,0)$, $\lambda^\vee=(3,2,2,1,0)$,   $\mu=(3,1,0)$, $\mu^\vee=(4,4,4,3,1)$, $\mu^t=(2,1,1,0)$, $\nu=(3,3,1,1,0^2)$, $\nu^t=(4,2,2)$ with even parts and $T\in LRS_6(\lambda/\mu,\nu)$
\begin{align} & T=\YT{0.13in}{}{
 {{},{},{},{1}},
 {{},{1},{2},{}},
 {{1},{2},{},{}},
 {{2},{3},{},{}},
 {{4},{},{},{}},
}\in LRS_6(\lambda/\mu,\nu)\overset{ \sim}{\underset{ \lozenge}\rightarrow} Q=\YT{0.13in}{}{
 {{},{},{},{1}},
 {{},{2},{1},{}},
 {{3},{2},{},{}},
 {{3},{1},{},{}},
 {{1},{},{},{}},
}=\lozenge(T)\in Tab(\lambda/\mu,\nu^t)\overset{ \sim}{\underset{\pi\circ \scriptstyle tr}\rightarrow}
Q^{\pi\circ tr}=\YT{0.13in}{}{
 {{},{},{},{},{1}},
 {{},{},{},{1},{}},
 {{},{1},{2},2,{}},
 {{1},{3},{3},{},{}},
}
=\blacklozenge T\nonumber\\
&\blacklozenge T \in LR({\mu^\vee}^t/{\lambda^\vee}^t,\nu^t).\nonumber
\end{align}

The bijection $\blacklozenge=\pi\circ \scriptstyle tr\circ \lozenge$ is the LR orthogonal symmetry map \cite{acm09}.

The tableaux $Q$ and $T$  are both defined by the sequence of nested partitions $$\mu^{(0)}=\lambda\supset_{vert} \mu^{(1)}=(3,2,2,1,0^2)\supset_{vert} \mu^{(2)}=(3,1,1,0^3)\supset_{vert}\mu^{(3)} =\mu$$
which  decomposes $\lambda/\mu$ into three vertical strips $$J_1:=\mu^{(0)}/\mu^{(1)},J_2:= \mu^{(1)}/\mu^{(2)}, J_3=\mu^{(2)}/\mu^{(3)}$$
\end{ex}
The bijection $\lozenge$  is so natural that  we can intertwine LR-Sundaram tableaux  with their $Q$ presentations.
We define the set
\begin{align}\label{rec:set}Rec_{2n}(\lambda/\mu):=\lozenge LRS(\lambda/\mu)\\
LRS_{2n}(\lambda/\mu)\overset{\sim}\longrightarrow Rec_{2n}(\lambda/\mu).
\end{align}

The set $Rec_{2n}(\lambda/\mu)$ is characterized in \cite{watanabe} (and combinatorially in \cite{azreco}) by translating the conditions in Definition \ref{def:lrs} to $Q$.
\begin{prop}\cite{watanabe}\label{prop:tilderec} Let $\lambda \in Par_{\le 2n}$ and $\mu \in Par_{\le n }$ be such that $\mu\subseteq\lambda$. The  set $Rec_{2n}(\lambda/\mu)\subseteq Tab(\lambda/\mu)$ satisfy  the following  conditions:
\begin{enumerate}
\item[$(R1)$] The entries of $Q$ strictly decrease along the rows from left to right.

\item[$(R2)$] The entries of $ Q$ weakly decrease along the columns from top to bottom.

\item[$(R3)$] For each $k > 0$, the number $Q[k]\in 2\mathbb{Z}$.

\item[$(R4)$] For each $k > 0$, it holds that

$$Q[k] \ge 2(\ell(\mu^{(k-1)})- n),$$
\noindent where $\mu^{(k-1)}$ is the partition such that
\begin{align}D(\mu^{(k-1)}) = D(\mu) \cup\{(i, j) \in D(\lambda/\mu) | Q(i, j) \ge  k\}.\label{nested}\end{align}

\item[$(R5)$] For each $r, k >0 $, let $Q_{\le r}[k]$ denote the number of occurrences of $k$ in $Q$ in the
$r$-th row or above. Then, the following inequality holds:
$$Q_{\le r}[k + 1] \le Q_{\le r}[k].$$

\end{enumerate}

\end{prop}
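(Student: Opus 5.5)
We prove that for every $T\in LRS_{2n}(\lambda/\mu)$ the tableau $Q=\lozenge(T)$ satisfies $(R1)$--$(R5)$, by translating conditions $(1)$--$(3)$ of Definition \ref{def:lrs} through the strip decomposition of the preceding Remark. Recall that $\mu^{(0)}=\lambda\supset_{vert}\mu^{(1)}\supset_{vert}\cdots\supset_{vert}\mu^{(N)}=\mu$, that $J_k=\mu^{(k-1)}/\mu^{(k)}$ has $\nu^t_k$ cells filled top to bottom in $T$ by $1,\dots,\nu^t_k$, and that $Q$ puts $k$ in every cell of $J_k$. The first observation is that \eqref{nested} holds by construction: $\{Q\ge k\}$ is exactly $J_1\cup\cdots\cup J_k$, and by the Remark the complement of this union in $\lambda$ is $\mu^{(k)}$. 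So the partition called $\mu^{(k-1)}$ in $(R4)$ is the one the Remark produces; note that $(R4)$ is stated with index $k-1$, so the index bookkeeping must be kept consistent with \eqref{nested} throughout.

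The conditions $(R1)$--$(R3)$ are routine.
\begin{itemize}
\item[$(R1)$] Each $J_k$ is a vertical strip, so a row contains at most one cell with label $k$. The strips are peeled in order from the outer boundary of $\lambda$ inward, so within a row the label decreases from left to right.
\item[$(R2)$] Along a column, the cells of $\mu^{(k-1)}$ lie above those of $\lambda/\mu^{(k-1)}$. Hence labels weakly decrease going down.
\item[$(R3)$] We have $Q[k]=\nu^t_k$, which is even because $\nu$ is an even partition by condition $(2)$.
\end{itemize}

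For $(R5)$, the Yamanouchi property $(1)$ of the reverse column word must be converted into a rowwise statement. Fix $r$ and $k$. By the Remark, the entries of $J_{k+1}$ are $1,\dots,\nu^t_{k+1}$ top to bottom, and a Yamanouchi reading forces the $j$-th cell of $J_{k+1}$ to lie weakly below the $j$-th cell of $J_k$. This is the standard fact that consecutive columns of the rectified Yamanouchi tableau correspond to strips, each dominated by the previous one. Granting this, the number of cells of $J_{k+1}$ in rows $\le r$ is at most the number of cells of $J_k$ in rows $\le r$, which is exactly $Q_{\le r}[k+1]\le Q_{\le r}[k]$. I would verify the domination claim by induction on $j$. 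The key point is that the letter $j$ in $J_{k+1}$ is read after the letter $j$ in $J_k$ and before $j+1$ in $J_k$, and combining this with semistandardness of $T$ pins down the rows.

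The main obstacle is $(R4)$, which encodes the Sundaram condition $(3)$. We need $\nu^t_k\ge 2(\ell(\mu^{(k-1)})-n)$. Set $m=\ell(\mu^{(k-1)})-n>0$, since otherwise there is nothing to prove. Row $n+m$ of $\mu^{(k-1)}$ is nonempty, and because $\ell(\mu)\le n$ its first cell lies in $\lambda/\mu$. This cell has $T$-entry at least $2m$ by condition $(3)$. Column $1$ contains no cell of $\mu$ below row $n$, so I would argue that the first-column cells in rows $n+1,\dots,\ell(\mu^{(k-1)})$ belong to strips $J_{k'}$ with $k'\ge k$. The entries of $J_{k'}$ run $1,\dots,\nu^t_{k'}$ with $\nu^t_{k'}\le\nu^t_k$, so an entry $\ge 2m$ appearing in such a strip forces $\nu^t_k\ge\nu^t_{k'}\ge 2m$. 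The delicate point is to check that the first-column cell of row $\ell(\mu^{(k-1)})$ indeed lies in some strip $J_{k'}$ with $k'\ge k$, rather than in an earlier strip. This follows from \eqref{nested}, because that cell belongs to $\mu^{(k-1)}$.
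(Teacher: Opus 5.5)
Your overall route is the right one: check $(R1)$--$(R5)$ for $Q=\lozenge(T)$ by pushing conditions $(1)$--$(3)$ of Definition \ref{def:lrs} through the strip decomposition of the Remark. This is also the translation the paper alludes to; it quotes the statement from \cite{watanabe} without proof. Your arguments for $(R1)$--$(R3)$ and your final argument for $(R4)$ are correct.

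Your first paragraph, however, has the identification backwards. Since $\lozenge$ labels every cell of $J_i$ by $i$, one has $\{Q\ge k\}=J_k\cup\cdots\cup J_N=\mu^{(k-1)}/\mu$, and this is exactly \eqref{nested}. The set $J_1\cup\cdots\cup J_k$ you wrote is $\{Q\le k\}$. Moreover, $D(\mu)\cup J_1\cup\cdots\cup J_k$ is in general not even a Young diagram: in Example \ref{ex:symmetry}, row $2$ of $D(\mu)\cup J_1$ contains $(2,1)$ and $(2,3)$ but not $(2,2)$. This is not cosmetic. With your version, the first-column cell of row $\ell(\mu^{(k-1)})$ would lie in some $J_{k'}$ with $k'\le k$, giving $\nu^t_{k'}\ge\nu^t_k$, which is the wrong direction for $(R4)$. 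Your last paragraph in fact uses the correct version.

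The more substantive problem is the justification you propose for the domination claim behind $(R5)$. Your key point, that the letter $j$ of $J_{k+1}$ is read before the letter $j+1$ of $J_k$, is false. In Example \ref{ex:symmetry}, the $2$ of $J_1$ is in cell $(2,3)$ and the $1$ of $J_2$ is in cell $(2,2)$, and the reverse column word reads $(2,3)$ first. So the induction you sketch cannot go through as stated.

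The domination itself is true, and it follows without any reading-order argument:
\begin{enumerate}
\item By the construction in the Remark, the $j$-th cell of $J_k$ from the top is the one containing $j$, and it is the $k$-th rightmost occurrence of $j$ in $T$, since each step removes the rightmost remaining $1,2,\dots,\nu^t_k$.
\item The occurrences of a fixed letter in a semistandard skew tableau form a horizontal strip, so moving leftwards along them the row index weakly increases.
\item Hence the $(k+1)$-th rightmost $j$ lies weakly below the $k$-th rightmost $j$. By your counting argument, this yields $Q_{\le r}[k+1]\le Q_{\le r}[k]$.
\end{enumerate}
The Yamanouchi condition enters only through the Remark itself, which guarantees that the $J_k$ are vertical strips $\mu^{(k-1)}/\mu^{(k)}$ filled $1,\dots,\nu^t_k$ from top to bottom.
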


\medskip
\begin{obs} Let $N$ be the largest entry in $Q$ where we are assuming the blank boxes filled with $0$. When $N=0$ one has $\lambda=\mu \Leftrightarrow\lambda/\lambda=()\Leftrightarrow \rm{wt}(Q)=()$. Let $N\ge 1$.
Since $Q[k]\in 2\mathbb{Z}$ and  $Q[k+1]\le Q[k]\le \ell(\lambda)\le 2n$, for any $k>0$,
 the weight of $Q$ is   the partition $\rm{wt}(Q)=(Q[1], \dots, Q[N])$
 and  its  transpose or conjugate is an even partition $\nu=(\nu_1=N,\dots,\nu_{Q[1]})$.
Thus
 $Q$ is also defined by the sequence of nested partitions in \eqref{nested}

\begin{align}\mu^{(0)}=\lambda\supset_{vert} \mu^{(1)}\supset_{vert} \cdots\supset_{vert}\mu^{(N)} =\mu,~~
\end{align}

\noindent  where $\mu^{(i-1)}/\mu^{(i)}$ is a vertical strip of even length $Q[i]\ge 2$ satisfying $(R4)$ and $(R5)$ conditions, for $i=1,\dots,N$.

\end{obs}

\subsection{The quantum LR bijection}

A semistandard tableau $S \in SST_{2n}(\mu)$ is said to
be symplectic if
$S(k, 1) \ge 2k-1$,   for all $  k \in  [ 1,\ell(\lambda)]$.
Let $SpT_{2n}(\mu)\subseteq SST_{2n}(\mu)$ denote the set of symplectic semi-standard tableaux of shape $\mu$, with entries in $[1,2n]$ \cite{king76,watanabe}. The definition of symplectic tableau forces that $\mu$ has at most $n $ parts \cite[Proposition 2]{azreco}, and in this case $\mu\in Par_{\le n}$.


Let $\lambda\in Par_{\le 2n}$. The quantum Littlewood–Richardson bijection of type AII, $LR^{AII}$, is the
algorithm  \cite[Section 3.1]{watanabe,watanabe25} which takes $T \in  SST_{2n}(\lambda)$ as input, and returns a pair of tableaux $(P^{AII}(T),Q^{AII}(T))\in SpT_{2n}(\mu)\times Rec_{2n}(\lambda/\mu)$ for some $\mu\subseteq \lambda$,
\begin{align}\label{wat0}LR^{AII}:& SST_{2n}(\lambda) \overset{\sim}\rightarrow\bigsqcup_{\begin{smallmatrix}\mu\in Par_{\le n}\\
\mu\subseteq\lambda\end{smallmatrix}}SpT_{2n}(\mu)\times Rec_{2n}(\lambda/\mu),\nonumber\\
&\qquad\qquad T\mapsto (P^{AII}(T),Q^{AII}(T))
\end{align}
where $Q^{AII}(T)$ is called a recording tableau of skew shape $\lambda/\mu$. In particular, if $\lambda$ has at most $ n$ parts then $SpT_{2n}(\lambda)\subseteq SST_{2n}(\lambda)$ and for $T$  symplectic, $Rec_{2n}(\lambda/\lambda)=\{()\}$,  $LR^{AII}(T)=(P^{AII}(T),Q^{AII}(T))=(T,())$.

Then
\begin{align}\label{rec=rec2}Rec_{2n}(\lambda/\mu) = \{Q^{AII}(T) | T \in SST_{2n}(\lambda) \mbox{ such that $sh(P^{AII}(T)) = \mu $}\}
\end{align}
and is called the set of recording tableaux of shape $\lambda/\mu$ of $LR^{AII}$.

\section{The algorithm computing the inverse of the quantum LR map}\label{sec:inverse}

\subsection{Preliminaries on removals}
We fix $l \in [0, 2n]$ and $\mathbf{a} = (a_1, \dots , a_l)$ a column  in $SST_{2n}(\varpi_l)$.
We often regard $\mathbf{a}$ as a set. We recall the following useful property on removals \cite{watanabe} in the computation of the inverse reduction. For each $x\in\mathbb{Z}$, set $s(x)=x+1$, if $x\notin 2\mathbb{Z}$, and $s(x)=x-1$,  if $x\in 2\mathbb{Z}$.
\begin{prop}\cite[Proposition 4.2.2, 4.2.7, Corollary 4.2.8]{watanabe}\label{prop:rem}
\begin{enumerate}

\item[(1)] $i\in [1,l]$ and $a_j\notin rem(\mbf{a})$, for $j\in[i,l]$ then $rem(\mbf{a})=rem(a_1,\dots,a_{i-1})$.

\item[(2)] for each $i\in [1,l]$, $a_i\in rem(\mbf{a})$ if and only if one of the following holds
\begin{enumerate}
\item[(a)] $a_i$ odd, $i<l$, $a_{i+1}=a_i+1$ and $a_i<2i-|rem(a_1,\dots,a_{i-1})|$

\item[(b)] $a_i$ even, $i>1$, $a_{i}=a_{i-1}+1$ and $a_i<2i-|rem(a_1,\dots,a_{i-2})|-1$
\end{enumerate}

\item [(3)]  $a_i \in rem(\mbf{a})$ if and only if $s(a_i) \in rem(\mbf{a})$.
Consequently, $|rem(\mbf{a})|\in 2\mathbb{Z}$.



\end{enumerate}

\end{prop}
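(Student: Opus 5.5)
The plan is to work directly from Watanabe's recursive definition of $rem(\mathbf a)$ (the set of entries removed from the column $\mathbf a$ in the symplectic reduction), which is not restated in this paper. I organize everything around one structural fact about that definition, which I will call the \emph{prefix description}. It says that $rem(\mathbf a)$ is built by scanning the column from top to bottom. At each step a pair of consecutive entries $a_{i}$, $a_{i+1}=a_i+1$ with $a_i$ odd is either removed or kept. The decision depends only on $a_i$, on the index $i$, and on the number of entries already removed above it, i.e.\ on $|rem(a_1,\dots,a_{i-1})|$. Thus I first record the two identities
$$rem(a_1,\dots,a_{i+1})=rem(a_1,\dots,a_{i-1})\sqcup\{a_i,a_{i+1}\}\quad\text{or}\quad rem(a_1,\dots,a_{i+1})=rem(a_1,\dots,a_{i-1}),$$
and I check, by unwinding the definition, that the first case occurs exactly when the symplectic condition fails at the shifted position.

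The symplectic condition is $S(k,1)\ge 2k-1$. After $|rem(a_1,\dots,a_{i-1})|$ entries have been deleted, the entry $a_i$ sits in row $i-|rem(a_1,\dots,a_{i-1})|$ of the reduced column. Rewriting the failure of the condition at that row as an inequality in terms of $i$ gives precisely condition (a) of Proposition \ref{prop:rem}(2). This shifting-of-rows bookkeeping is the only delicate computation, and I expect it to be the main obstacle. The difficulty is making sure the count of previously removed entries is the right one: $|rem(a_1,\dots,a_{i-1})|$ in (a), but $|rem(a_1,\dots,a_{i-2})|$ in (b). I would also verify it in small examples, such as $\mathbf a=(1,2)$, $(1,2,3,4)$ and $(1,3,4)$.

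With the prefix description in hand, I argue by induction on the length $l$ of the column.

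\textbf{Part (2), odd case.} For odd $a_i$, membership in $rem(\mathbf a)$ is decided at step $i$. By the prefix description this happens iff $a_{i+1}=a_i+1$, $i<l$, and the shifted inequality (a) holds.

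\textbf{Part (2), even case.} For even $a_i$, the entry $a_i$ can only be removed together with its partner $a_{i-1}=a_i-1$. So $a_i\in rem(\mathbf a)$ iff $a_{i-1}\in rem(\mathbf a)$. Applying (a) at index $i-1$ and substituting $a_{i-1}=a_i-1$ turns $a_{i-1}<2(i-1)-|rem(a_1,\dots,a_{i-2})|$ into exactly (b).

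\textbf{Part (3).} This is now immediate. Removed entries come in pairs $\{x,x+1\}$ with $x$ odd, and for such a pair $s(x)=x+1$ and $s(x+1)=x$. Hence $a_i\in rem(\mathbf a)$ iff $s(a_i)\in rem(\mathbf a)$. Since $rem(\mathbf a)$ is a disjoint union of such pairs, $|rem(\mathbf a)|$ is even.

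\textbf{Part (1).} I use downward induction from $l$ to $i$. If no $a_j$ with $j\ge i$ is removed, then every step $j\ge i$ of the scan falls in the second case of the prefix identity. The prefix identity then gives $rem(a_1,\dots,a_j)=rem(a_1,\dots,a_{j-1})$ for all $j\ge i$. Chaining these equalities yields $rem(\mathbf a)=rem(a_1,\dots,a_{i-1})$.
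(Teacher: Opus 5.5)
The paper gives no proof of this statement (it is quoted from Watanabe), so your argument must start from Watanabe's definition of $rem$, which you never write down. Everything is routed through the ``prefix description'', and the one computation you offer to justify it is wrong. Failure of $S(k,1)\ge 2k-1$ at the shifted row $k=i-r$, with $r=|rem(a_1,\dots,a_{i-1})|$, reads $a_i<2i-2r-1$, not $a_i<2i-r$. The coefficient of $r$ differs, and even for $r=0$ the two inequalities disagree when $a_i=2i-1$. For $\mathbf a=(1,2)$, condition (a) removes both entries, as it must since $rem(12\cdots l)=(12\cdots l)$ for even $l$ (Remark \ref{nullslack}); yet $a_1=1$ satisfies $1\ge 1$. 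Moving the test to the even partner does not repair this. For $2n=6$ and $\mathbf a=(2,3,4,5,6)$, condition (a) gives $rem(\mathbf a)=\{3,4,5,6\}$, hence $\redu(\mathbf a)=(2)$. This agrees with $\redu_1^{-1}(2)=(23456)$ in Example \ref{ex:n=3} and with the length bound $\min(l,2n-l)=1$. But after deleting $3,4$ the column $(2,5,6)$ is already symplectic, so a ``delete rows, then check $a\ge 2k-1$'' rule keeps $5,6$ and returns a column of length $3$.

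The missing idea is that a removed pair must be deleted from the alphabet as well as from the column. The $r$ removed letters are all smaller than $a_i$, so the even partner $a_{i+1}=a_i+1$ moves to row $i+1-r$ \emph{and} to value $a_{i+1}-r$. The failure $a_{i+1}-r<2(i+1-r)-1$ is then exactly $a_i<2i-r$; testing at the odd entry instead gives $a_i<2i-r-1$, again not (a). Even with this correction, the claim that membership of $\{a_i,a_{i+1}\}$ is decided by $a_i$, $i$ and $|rem(a_1,\dots,a_{i-1})|$ in one top-to-bottom pass is essentially the content of Watanabe's Propositions 4.2.2 and 4.2.7. It has to be derived from his definition, not assumed.

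Your two-case identity is also misstated. Suppose $a_i$ is even and is removed together with $a_{i-1}=a_i-1$. Then $rem(a_1,\dots,a_{i+1})$ contains $a_{i-1}$, while $a_{i-1}\notin rem(a_1,\dots,a_{i-1})\cup\{a_i,a_{i+1}\}$, so neither alternative holds. The correct one-step form is: $rem(a_1,\dots,a_j)=rem(a_1,\dots,a_{j-1})\sqcup\{a_{j-1},a_j\}$ if (b) holds at $j$, and $rem(a_1,\dots,a_{j-1})$ otherwise. Once a correct scan of this kind is established from the actual definition, your deductions of (2)(b), (3) and (1) go through.
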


\subsection{The algorithm computing the inverse LR map and the  row index slack vectors}

Let  $\mu\subseteq\lambda\in Par_{\le 2n}$ with  $\mu\in Par_{\le n}$. Fix arbitrarily $S\in SpT_{2n}(\mu)$. If $\lambda=\mu$,  $Rec_{2n}(\lambda/\lambda)=Rec_{2n}(())$ and  ${\LRAII^{AII}}^{-1}(S, ())=S$.
\begin{align}{\LRAII^{AII}}^{-1}:SpT_{2n}(\mu)\times Rec_{2n}(())&\longrightarrow SpT_{2n}(\mu)\\
(S,())&\mapsto S,~\redu(S)=S
\end{align}

Let us consider $\mu\subset\lambda$ and define

\begin{align}\label{Rtilde}
&{LR_{|S}^{AII}}^{-1}:\{S\}\times  Rec_{2n}(\lambda/\mu)\longrightarrow SST_{2n}(\lambda)\nonumber\\
&\qquad\qquad\qquad \quad(S,Q)\mapsto S^{\r^{(N)}\cdots\r^{(2)} \r^{(1)}}=  c \circ (\rm{red}_{t_0^{(1)}}^{-1},\rm{id})\circ (\underset{{\mu^{1}/{\mu^{1}}'}} \leftarrow
\bigg(\cdots  \nonumber\\
&(\underset{{\mu^{N-2}/{\mu^{N-2}}'}}\leftarrow\bigg( c \circ (\rm{red}_{t_0^{(N-1)}}^{-1},\rm{id})\circ (\underset{{\mu^{N-1}/{\mu^{N-1}}'}} \leftarrow
\bigg(c \circ (\rm{red}_{t_0^{(N)}}^{-1},\rm{id})\circ (\underset{{\mu^{N}/{\mu^{N}}'}}\leftarrow S)\bigg))\bigg))\cdots)\bigg))
\end{align}
where $N>0$ is the number of vertical strips of $Q$, $\lambda=\mu^0\supset_{vert}\mu^1\supset_{vert}\cdots \supset_{vert} \mu^N=\mu$, $[\r^{(N)},\cdots,\r^{(2)}, \r^{(1)}]$ is the slack vector sequence of $Q$ that encodes the slackness of this sequence of vertical strips to be defined in the next two susections, $\rm{red}_{t_0^{(i)}}^{-1}$ is the inverse of the reduction map, $\redu$, on $SpT_{2n}(\varpi_{t^{(i)}_0})$, or the expanding map via the reverse removal, and $\c$ is the concatenation in the plactic monoid.  The basic operation
$$\c \circ (\rm{red}_{t_0}^{-1},\rm{id})\circ (\underset{{\mu/\mu'}}\leftarrow S)$$
is defined for $N=1$ in Theorem \ref{thm:verygeneralstrip} in the next subsection.
For the reverse bumping routes properties we refer to \cite{azreco} and \cite{fulton}.

\subsection{ 1-vertical strip recording tableaux, slacks and  slack  vectors }\label{subsec:1verticalstrip}
Let $\mu\subset_{vert}\lambda$  with $\ell(\mu)\le n\le l=\ell(\lambda)$. Let $t_0=\ell(\mu)-l_0$ where $l_0$ is the number of cells in the vertical strip $\lambda/\mu$ with row coordinates in $[1,\ell(\mu)]$. We call to $t_0$ the \emph{slack} of the vertical strip $\lambda/\mu$.

Let  $\mathbf{r}=\{r_1<\dots< r_{t_0}\}\subseteq [1,\ell(\mu)]$ be the complement of the set of the row coordinates of the  cells of the vertical strip $\lambda/\mu$ in $[1,\ell(\mu)]$. We call to $\r$ the \emph{slack} \emph{row} \emph{index} \emph{vector} of the vertical strip $\lambda/\mu$, and define $\delta_\mathbf{r}$ to be  the  \emph{incidence} \emph{vector} of the \emph{subset} $\mathbf{r}$  of $ [1,\ell(\mu)]$, that is, $\delta_\r=(x_s)_{s\in [1,\ell(\mu)]}$, $x_s=1$ if $s\in\r$ and $0$ otherwise. We call to $\delta_\mathbf{r}$ the \emph{slack} \emph{incidence} \emph{vector} of the vertical strip $\lambda/\mu$.

Define $\mu'\subset_{vert} \mu$ such that $\mu'_i=\mu_i-1$ if $i\in\r=\{r_1,\dots, r_{t_0}\}$ and $\mu_i$, otherwise. That is $\mu'=\mu-\delta_\mathbf{r}$. Then the vertical strip $\mu/\mu'$ has $t_0=\ell(\mu)-l_0$ cells with row coordinates   $\r=(r_1<\dots< r_{t_0})$ the  \emph{slack} \emph{row} \emph{index vector} of vertical strip $\lambda/\mu$; and \begin{align}\lambda=\mu-\delta_\mathbf{r}+\varpi_{\ell(\lambda)}=\mu'+\varpi_{\ell(\lambda)}\Leftrightarrow \varpi_{\ell(\lambda)}=\lambda-\mu'
\end{align}

Let us write $\lambda=(\lambda_0,\varpi_{l-\ell(\mu)})$ where
 $\mu\subseteq_{vert}\lambda^0\in Par_{\le n}$ such that $\ell(\lambda^0)=\ell(\mu)$ and, thus,  $\lambda^0/\mu$ is a vertical strip with $l_0=|\lambda_0|-|\mu|\le \ell(\mu)$ cells, and slack $t_0=\ell(\mu)-l_0$.

  Let $Q\in
Rec_{2n}((\lambda_0,\varpi_{l-\ell(\mu)})/\mu)$. Then $Q[1]+t_0=l$. Furthermore,
the  conditions $(R3)$, $(R4)$, in  Proposition \ref{prop:tilderec},  impose relations between $l=\ell(\lambda)$ and the  slack $t_0$ of $\lambda/\mu$ (or $\lambda^0/\mu$),
\begin{align}\label{conditions}&l\le 2n-t_0\Leftrightarrow l+t_0\le 2n\Leftrightarrow t_0\le 2n-l,\\
&~\mbox{ and }\nonumber\\
&~0\le l-t_0 \in 2\mathbb{Z}.
\end{align}

We  define the reverse column insertion  data of $Q$ to be the vertical strip $\mu/\mu'$. For simplicity, we often abuse notation and identify the \emph{slack row} \emph{index} \emph{vector} $\r$ with $\mu/\mu'$ and use the slack row index vector $\r$ to apply reverse column insertion to an $S\in SpT_{2n}(\mu)$. We also often  say the \emph{slack} $t_0$, the \emph{slack row} \emph{index} \emph{vector} $\r$ and the \emph{slack} \emph{incidence} \emph{vector} $\delta_\r$ of $Q$ in the sense that the vertical strip $\lambda/\mu$ defines $Q$.

Consequently, the $1$-vertical strip recording tableau of shape $\lambda/\mu$ with slack $t_0$ and $l=\ell(\lambda)$ is  characterized as
\begin{align} \label{rec:1vertical} Rec_{2n}((\lambda_0,\varpi_{l-\ell(\mu)})/\mu)=\left\{Q=\YT{0.15in}{}{
{{},{},{},},
 {{},{},{}1},
 {$\vdots$,{}},
 {{},1},
 {{}},
 {{1}},
 {{1}},
 {{$\vdots$}},
 {{1}},
 {{1}},
},~\ell(\mu)\le l,~~ 0\le l- t_0\in 2\mathbb{Z},~~ l\le 2n-t_0\right\}
\end{align}



\begin{ex}\label{ex:q}
For $n=6$, let
\begin{align*}& Q=\YT{0.15in}{}{
 {{},{},{},{}},
 {{},{},,1},
 {{},{},},
 {{},{},},
 {{},{1}},
 {{}},
 {{1}},
 {{1}},
}\in Rec_{2n}((\lambda_0,\varpi_{l-\ell(\mu)})/\mu),\nonumber
\end{align*}
where $ \mu=(4,3,3,3,1,1)\subset_{vert}\lambda^0=(4,4,3,3,2,1)\subseteq \lambda=(\lambda_0,\varpi_{l-\ell(\mu)})=(\lambda_0,\varpi_{2})$, and $\ell(\lambda^0)=\ell(\mu)=6$, $ l_0= 2$, slack $t_0=\ell (\mu)-l_0=4$, $Q[1]+t_0=l(\lambda)=8\le 12-4$.

One has $\mu'=\mu-\delta_\mathbf{r}=\mu-(1,0,1,1,0,1)=(3,3,2,2,1,0)$ where $\mathbf{r}=\{1,3,4,6\}\subseteq [1,\ell(\mu)]$ is the slack row index vector of $\lambda/\mu$, and
$$\begin{array}{llllll}\mu/\mu'={\tiny\ydiagram{3+1,0,2+1,2+1,0,0+1}}&\qquad \r=\YT{0.15in}{}{
 {1},
 {3},
 {4},
 {6},
}&\qquad  \delta_\r=\YT{0.15in}{}{
 {1},
 {0},
 {1},
 {1},
 {0},
 {1}
}
\end{array}$$
\end{ex}
Indeed $\varpi_{\ell(\lambda)}=\lambda-\mu'$.

Let $S\in SpT_{2n}(\mu)$ and $Q\in Rec_{2n}((\lambda_0,\varpi_{l-\ell(\mu)})/\mu)$.
Let  $S'_\r=(a_1,\dots,a_{t_0})\in Sp_{2n}(\varpi_{t_0})$ be the set of the bumped elements from the first column $S_1$ of $S$ by applying successively the reverse column insertion to the  rows of $S$, as prescribed by slack row index vector $\r=(r_1<\dots<r_{t_0})$, going from the largest to the smallest row;   and let $S'_{\ge 2}$ be  the remained tableau after the application of that reverse column insertion to  $S$.

 For the returned pair $(S'_\r,S'_{\ge 2})$ obtained under the action of the reverse Schensted insertion to the entries in the cells  $\mu/\mu'$ of $S$, put
 \begin{align}\label{revschensted}(S'_\r,S'_{\ge 2})=:(\underset{{\mu/\mu'}}\leftarrow S), \mbox{  or $(S'_\r,S'_{\ge 2})=:(\underset{{\r}}\leftarrow S)$}
 \end{align}
 where $S'_\r\subseteq
  S_1$ is the column comprising the $t_0$ bumped entries from $S_1$, and $S'_{\ge 2}$ is what remains after the bumping applied to the entries in the $t_0$ cells of $\mu/\mu'$ in $S$.

When $l_0=\ell(\mu)\Leftrightarrow\r=()$,  $\mu'=\mu$, and the bumped column $S'_\r=()$ is empty, $((),S):=(\underset{{\mu/\mu}}\leftarrow S)$.

When $l_0=0\Leftrightarrow\r=[1,\ell(\mu)]$, $\mu'=\mu-(1^{\ell(\mu)})$ and the bumped column $S'_\r=S_1$, $(S_1,S_{\ge 2}):=(\underset{{\mu/\mu'}}\leftarrow S)$.

Since $S$ is symplectic and $S'_\r$ is contained in the first column of $S$,  $S'_\r\subseteq SpT_{2n}(\varpi_{t_0})$. (Any subset of a symplectic column is still symplectic.)

For $Q\in Rec_{2n}((\lambda_0,\varpi_{l-\ell(\mu)})/\mu)$ \eqref{rec:1vertical}
\begin{align}\label{tildeR2}&{\LRAII^{AII}}^{-1}(S, Q)= \c \circ (\rm{red}_{t_0}^{-1},\rm{id})\circ (\underset{{\mu/\mu'}}\leftarrow S)\nonumber\\
&=\c\circ (\rm{red}_{t_0}^{-1},\rm{id})(S'_\r,S'_{\ge 2})\nonumber\\
&=\c\circ (\rm{red}_{t_0}^{-1}(S'_\r),S'_{\ge 2})\nonumber\\
&:=S^\r
\end{align}
where $\c$ means concatenation in the plactic monoid and $\rm{red}_{t_0}^{-1}$ means \emph{reverse or the inverse} \emph{reduction map} applied to a  column in $SpT_{2n}(\varpi_{t_0})$, a symplectic column of length the slack number  $t_0$ of $Q$, according to the rules in Theorem 7 in \cite{azreduction}. The reduction map $\redu$ on $SST_{2n}(\varpi_l)$ is injective \cite[Proposition 4.3.6, Corollary 4.4.3]{watanabe} and returns symplectic columns of  length  $0\le k\le \min(l,2n-l)$ and $l-k\in 2\Z$. Hence the inverse $\rm{red}_{t_0}^{-1}$ on $SpT_{2n}(\varpi_{t_0})$ exists

\begin{align} \rm{red}_{t_0}^{-1}: SpT_{2n}(\varpi_{t_0})\rightarrow SST_{2n}(\varpi_{l})
\end{align}
and the rules in \cite{azreduction}
exhibit it. 

Let $S'_\r=(a_1,\dots,a_{t_0})\in SpT_{2n}(\varpi_{t_0})$  be the column of bumped entries  as in \eqref{revschensted}.
Then
\begin{align}\label{inversered}
\rm{red}_{t_0}^{-1}(S'_\r):=T\in SST_{2n}(\varpi_l)
\end{align}
where $T$ is  explicitly defined in  full generality in \cite[Theorem 7 ]{azreduction}, and, in particular, for certain fundamental patterns as in Theorem 2, Theorem 4 and Theorem 6 in \cite{azreduction}.

\begin{thm}\cite{azreco}\label{thm:verygeneralstrip} With the set up above where $S\in SpT_{2n}(\mu)$ and $Q\in  Rec_{2n}((\lambda^0,\varpi_{l-\ell(\mu)})/\mu)$, $l=\ell(\lambda)$, has slack row index vector  $\mathbf{r}=\{r_1<\dots< r_{t_0}\}$, one has the following assertion:

\begin{align}\label{inverse}&{LR^{AII}}^{-1}(S, Q)= c \circ (\rm{red}_{t_0}^{-1},\rm{id})\circ (\underset{{\mu/\mu'}}\leftarrow S)\nonumber\\
&=c\circ (\rm{red}_{t_0}^{-1},\rm{id})(S'_\r,S'_{\ge 2})\nonumber\\
&=c\circ (\rm{red}_{t_0}^{-1}(S'_\r),S'_{\ge 2})\nonumber\\
&=c\circ (T,S'_{\ge 2})\nonumber\\
&=T.S'_{\ge 2}\nonumber\\
&:=S^\r\in SST_{2n}(\lambda)
\end{align}
  such that $T$ is explicitly  given \cite[Theorem 7]{azreduction}, $S'_{\ge 2}\in SpT_{2n}(\mu')$, $\mu'=\mu-\delta_\r$ and  $\lambda=\mu'+\varpi_l$. 
\end{thm}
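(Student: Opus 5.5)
The plan is to show that the composite on the right of \eqref{inverse} undoes Watanabe's forward algorithm $LR^{AII}$ in the one-vertical-strip case. Since $LR^{AII}$ is a bijection (\eqref{wat0}), it suffices to check two things. First, the composite produces a well-defined tableau of shape $\lambda$. Second, applying $LR^{AII}$ to that tableau returns $(S,Q)$.

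\emph{Step 1: the reverse column insertion.} Apply reverse column insertion to $S$ at the cells of $\mu/\mu'$, processing the rows $r_{t_0}>\dots>r_1$ from the largest to the smallest. By the standard properties of reverse bumping routes (\cite{fulton}), each step ejects one entry from the first column. The reverse routes of successive cells, taken from bottom to top, move weakly left and their ejected entries strictly decrease. Hence the ejected entries form a column $S'_\r=(a_1<\dots<a_{t_0})\subseteq S_1$, and the remainder $S'_{\ge 2}$ is semistandard of shape $\mu'=\mu-\delta_\r$. The remainder is still symplectic: its first column is obtained from $S_1$ by deleting entries and shifting, which only decreases row indices. $S'_\r$ is symplectic because it is a subset of a symplectic column. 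The relations \eqref{conditions}, $t_0\le 2n-l$ and $l-t_0\in 2\Z$, are exactly the range on which $\redu_{t_0}^{-1}$ (\cite{watanabe,azreduction}) lands in $SST_{2n}(\varpi_l)$. So $T=\redu_{t_0}^{-1}(S'_\r)$ is a column of length $l$.

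\emph{Step 2: the shape of the product.} Show that $T.S'_{\ge 2}$ has shape $\mu'+\varpi_l=\lambda$. Since $\redu(T)=S'_\r$ and $\redu$ only removes pairs $\{x,s(x)\}$ (Proposition \ref{prop:rem}), $T$ contains $S'_\r$ together with pairs that are absent from $S'_\r$. Column-inserting the original bumped entries back reverses Step 1 and returns the shape $\mu$. The extra removable pairs then have to occupy new cells of the first column and of the rows outside $\r$. I would argue this using the row-bumping lemma for a column word followed by a tableau: the $l$ entries of $T$ create a vertical strip $\lambda/\mu'$, with one cell in each row $1,\dots,l$.

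\emph{Step 3: running the forward map.} Run $LR^{AII}$ on $T.S'_{\ge 2}$ and check that it first separates the first column as $T$ and the rest as $S'_{\ge 2}$. It then reduces $T$ to $S'_\r$ and column-inserts $S'_\r$ into $S'_{\ge 2}$, recovering $S$ with recording strip $\lambda/\mu$, i.e.\ $Q$. This is the main obstacle, because Watanabe's forward step is defined through $\imath$crystal/plactic operations rather than literally as "split off the first column". I would first use that the product is plactic-equivalent to the word $T\cdot S'_{\ge 2}$. I would then appeal to the description in \cite{azreco} of the forward step as reduction of the first column followed by Schensted column insertion. Injectivity of $\redu$ and the uniqueness of reverse bumping given the recording strip would then close the argument.
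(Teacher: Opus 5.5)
The paper does not prove this statement here; it quotes it from \cite{azreco}. So I judge the proposal on its own terms. The gap is in Step~2. The column bumping lemma only says that column-inserting $t_1<\dots<t_l$ into $S'_{\ge 2}$ adds \emph{some} vertical strip of $l$ cells to $\mu'$. It says nothing about which rows receive those cells, and for an arbitrary column and tableau your conclusion is false: inserting the column $(2,3)$ into the one-box tableau $(1)$ gives shape $(1,1,1)$, not $(1)+\varpi_2=(2,1)$.

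What you actually need is that $T$ fits as a new first column to the left of $S'_{\ge 2}$, i.e.\ $T(i)\le S'_{\ge 2}(i,1)$ for $i\le \ell(\mu')$, so that $T.S'_{\ge 2}$ is the juxtaposition $T\,|\,S'_{\ge 2}$. This is a genuine compatibility between $\mathrm{red}_{t_0}^{-1}$, the reverse bumping, and the constraints on $Q$. Your sentence about the extra pairs ``occupying new cells'' does not establish it. The entries of $T$ are inserted interleaved, so the pair entries can bump entries of $S'_{\ge 2}$, and the fact that re-inserting $S'_{\mathbf r}$ alone returns shape $\mu$ tells you nothing about them. Step~3 inherits the gap, since it starts from the assumption that $T.S'_{\ge 2}$ has first column $T$ and remainder $S'_{\ge 2}$.

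The fix is to run the argument backwards, using the same description of the forward step that you already invoke in Step~3.
\begin{itemize}
\item By bijectivity, let $U\in SST_{2n}(\lambda)$ be the preimage of $(S,Q)$.
\item Since $Q$ is a single strip, one forward step on $U$ already yields $S$. That is, $S$ is the column insertion of $\mathrm{red}(U_1)$ into $U_{\ge 2}$, where $U_1$ is the first column (length $l$) and $U_{\ge 2}$ has shape $\lambda-\varpi_l=\mu-\delta_{\mathbf r}=\mu'$.
\item Hence the cells created by that insertion are exactly $\mu/\mu'$. Reverse column insertion along them, bottom to top, returns $(\mathrm{red}(U_1),U_{\ge 2})=(S'_{\mathbf r},S'_{\ge 2})$.
\item Injectivity of $\mathrm{red}$ on $SST_{2n}(\varpi_l)$ then forces $U_1=T$.
\end{itemize}
So $T\,|\,S'_{\ge 2}=U$ is semistandard of shape $\lambda$ and equals the plactic product $T.S'_{\ge 2}$. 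The shape statement comes for free, and no independent Step~2 is needed.

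A smaller point in Step~1: the first column of $S'_{\ge 2}$ is not $S_1$ with entries deleted and shifted up. Reverse column insertion replaces an ejected entry $x$ in column $1$ by some $y\ge x$ coming from column $2$, in the same row. Symplecticity survives because column-$1$ entries only weakly increase in place, and only the bottom cell can disappear.
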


\begin{obs} \label{nullslack}If  the slack of  $Q$ is  $t_0=0$, then $l\in 2\Z$ and $\rm{red}_{0}^{-1}$ means reverse reduction in $[1,l]$ applied to a column of length $0$. That is, $\rm{red}_{0}^{-1}(())=(12\dots l)$ with $l\in 2\Z$ $\Leftrightarrow \redu(12\dots l)=()\Leftrightarrow \rm{rem}(12\dots l)=(12\dots l)$. 

\begin{align}&{LR^{AII}}^{-1}(S, Q)=\c \circ (\rm{red}_{0}^{-1},\rm{id})\circ (\underset{{\mu/\mu}}\leftarrow S)
=c\circ (\rm{red}_{0}^{-1},\rm{id})((),S)=c\circ (\YT{0.15in}{}{
 {{1}},
 {{2}},
 {{\vdots}},
 {{l}},
 },S)=\YT{0.15in}{}{
 {{1}},
 {{2}},
 {{\vdots}},
 {{l}},
 }\bigcdot S
 \end{align}

\begin{ex}\label{ex:qq} We resume to Example \ref{ex:q}. Let $n=6$, $Q\in Rec_{2n}((\lambda_0,\varpi_{l-\ell(\mu)})/\mu)$ as in Example \ref{ex:q} equipped with $t_0=4$, $\r=(1,3,4,6)$ and $\delta_\r=(1,0,1,1,0,1)$ and $\mu'=\mu-\delta_\r$.
\begin{enumerate}
\item
 Let $U=\YT{0.13in}{}{
 {{2},{2},2,2},
 {{3},{3},3},
 {6,6,6},
 {7,7,7},
 {10},
 {11},
}\in SpT_{12}(\mu)$,

 $(\underset{{\r}}\leftarrow U)=\big((2,6,7,11),
\YT{0.13in}{}{{{2},{2},2},
 {{3},{3},3},
 {6,6},
 {7,7},
 {10},
}\big)
=(\big(2,6,7,11),U'_{\ge 2}\big)$, $U'_{\ge 2}\in SpT_{12}(\mu')$,
and $$ \rm{red}_{4}^{-1}(2,6,7,11)=\YT{0.13in}{}{{{2}},
 {3},
 {4},
 {6},
 {7},
 {9},
 {10},
 {11}
}$$

Then \begin{align}\label{tildeR3}&{\LRAII^{AII}}^{-1}(U, Q)= \c \circ (\rm{red}_{4}^{-1},\rm{id})\circ (\underset{{\r}}\leftarrow U)\nonumber\\
&=\c\circ (\rm{red}_{t_0}^{-1},\rm{id})((2,6,7,11),U'_{\ge 2})\nonumber\\
&=\c\circ (\rm{red}_{t_0}^{-1}(2,6,7,11),U'_{\ge 2})\nonumber\\
&=\YT{0.13in}{}{
{{2}},
 {3},
 {4},
 {6},
 {7},
 {9},
 {10},
 {11},
}\bigcdot\YT{0.13in}{}{{2,{2},2},
 {{3},{3},3},
 {6,6},
 {7,7},
 {10},
}=\YT{0.13in}{}{{2,2,{2},2},
 {{3},3,{3},3},
 {4,6,6},
 {6,7,7},
 {7,10},
 {9},
 {10},
 {11},
}:=U^{\r}  \in SST_{12}(\lambda)
\end{align}

\item
Let $V=\YT{0.13in}{}{
 {{1},{1},1,1},
 {{4},{4},4},
 {5,5,5},
 {8,8,8},
 {9},
 {12},
}\in SpT_{12}(\mu)$, $(\underset{{\r}}\leftarrow V)=\big((1,5,8,12),
\YT{0.13in}{}{{{1},{1},1},
 {{4},{4},4},
 {5,5},
 {8,8},
 {9},
}\big)
=(\big(1,5,8,12),V'_{\ge 2}\big)$ where $V'_{\ge 2}\in SpT_{12}(\mu')$,
and $ \rm{red}_{4}^{-1}(1,5,8,12)=
\YT{0.13in}{}{
{{1}},
 {3},
 {4},
 {5},
 {8},
 {9},
 {10},
 {12},
 }
 $
 Then \begin{align}\label{tildeR4}&{\LRAII^{AII}}^{-1}(V, Q)= \c \circ (\rm{red}_{4}^{-1},\rm{id})\circ (\underset{{\r}}\leftarrow V)\nonumber\\
&=\c\circ (\rm{red}_{t_0}^{-1},\rm{id})((1,5,8,12),V'_{\ge 2})\nonumber\\
&=\c\circ (\rm{red}_{t_0}^{-1}(2,6,7),V'_{\ge 2})\nonumber\\
&=\YT{0.13in}{}{
{1},
 {3},
 {4},
 {5},
 {8},
 {9},
 {10},
 {12},
}\bigcdot\YT{0.13in}{}{
{{1},{1},1},
 {{4},{4},4},
 {5,5},
 {8,8},
 {9},
}
=
\YT{0.13in}{}{
{1,1,{1},1},
 {3,{4},{4},4},
 {4,5,5},
 {5,8,8},
 {8,9},
 {9},
 {10},
 {12},
}:=V^{\r}  \in SST_{12}(\lambda)
\end{align}
 \end{enumerate}

\end{ex}

\end{obs}
\subsection{One or more vertical strips, slack sequences and slack vector sequences}

We now introduce the \emph{slack} \emph{incidence} \emph{matrix} of a recording tableau $Q\in Rec_{2n}(\lambda/\mu)$ which encodes the information as described in  Proposition \ref{prop:slack} below to define ${\LRAII^{AII}}^{-1}$ on $SpT_{2n}\times Rec_{2n}(\lambda/\mu)$.

 \begin{defi}\label{def:tei} Let
 $Q\in  Rec_{2n}(\lambda/\mu)$  defined by the sequence of vertical strips
\begin{align}\label{slack}&\mu^{(0)}=\lambda\supset_{vert} \mu^{(1)}\supset_{vert} \cdots\supset_{vert}\mu^{(N-1)}\supset_{vert}\mu^{(N)} =\mu
\end{align}
such that $2\le |\mu^{(i-1)}/\mu^{(i)}|=Q[i]\in 2\Z$, $1\le i\le N$  and $\nu=( Q[1],\dots,Q[N])^t$ is an even partition. For   $1\le i\le N$,
let $t_0^{(i)}=\ell(\mu^{(i)})-l_0^{(i)}$ where $l_0^{(i)}$ is the number of cells in the vertical strip $\mu^{(i-1)}/\mu^{(i)}$ with row coordinates in $[1,\ell(\mu^{(i)})]$. Then,  for $i=1,\dots,N$,
\begin{enumerate}
\item $t^{(i)}_0\in\{0,1,\dots,\ell(\mu^{(i)}\}$ is the \emph{slack} of the \emph{vertical}  \emph{strip} $\mu^{(i-1)}/\mu^{(i)}$, and
    \item $\r^{(i)}\subseteq [1,\ell(\mu^{(i)})]$ with cardinal $|\r^{(i)}|=t_0^{(i)}$,
   the corresponding \emph{slack} (\emph{row} \emph{index}) \emph{vector}.   We write $\r^{(i)}=()$ or $ \emptyset$ when $t_0^{(i)}=0$.
\end{enumerate}
The sequence of \emph{slack} \emph{numbers} $\underline\t=(t_0^{(N)},\dots,t_0^{(1)})$,  is called  the \emph{slack} \emph{sequence} of $Q$  uniquely defined by \eqref{slack}. Analogously, the corresponding sequence of slack vectors $\underline\r=[\r^{(N)},\dots,\r^{(1)}]$ is called the \emph{slack} (row index) \emph{vector} \emph{sequence} of $Q$.

The $\ell(\mu^{(1)})\times N$ matrix  $[\delta_{\r^{(N)}},\dots,\delta_{\r^{(1)}}]$ is called  the \emph{slack} \emph{incidence} (row) \emph{matrix} of $Q$.
\end{defi}

\begin{ex}In Example \ref{ex:q}, $Q\in Rec_{12}(\lambda/\mu)$, $N=1$, $\mu^{(0)}=\lambda$, $\mu^{(1)}=\mu$ and $t_0^{(1)}=\ell(\mu)-l^{(1)}_0=6-2=4$ with $l_0^{(1)}=2$, and $Q[1]=4$. Thus $\ell(\mu^{(0)})=\ell(\lambda)=Q[1]+t_0^{(1)}=4+4$ and $Q[1]+2t_0^{(1)}\le 12$. The slack row index vector is $\r=(1,3,4,6)$, and the $\ell(\mu)\times 1$ slack incidence  matrix is $[\delta_{\r^{(1)}}]=[1\;0\;1\;1\;0\;1]^T$

\end{ex}

Given $Q\in  Rec_{2n}(\lambda/\mu)$, Proposition \ref{prop:tilderec}, $(R3)$, $(R4)$ impose conditions to the slack sequence  of  $Q$, and  $(R5)$ impose conditions to its  \emph{slack} \emph{vector} \emph{sequence} $\underline\r=[\r^{(N)},\dots,\r^{(1)}]$ equivalently to the slack incidence matrix $[\delta_{\r^{(N)}},\dots,\delta_{\r^{(1)}}]$ of $Q$.

Given $x\in\Z^p_\ge 0$, $y\in\mathbb{Z}^q_{\ge 0}$, we write $x\le_{\r} y$ to mean $p\ge q\ge 0$, and $x_i\le y_i$ whenever $y_i>0$. In this case, $x\le_\r ()$.

\begin{prop}\label{prop:slack} Let
 $Q\in  Rec_{2n}(\lambda/\mu)$ with vertical strip decomposition \eqref{slack}, and put $t^{(0)}_0:=0$ and $\r^{(0)}:=()$. Then, for each $1\le  i\le N$,

\begin{itemize}

\item[(a)]
 $\ell(\mu^{(i-1)})=Q[i]+t_0^{(i)}\ge \ell(\mu^{(i)}$,

\item[(b)] $0\le t^{(i-1)}_0\le t^{(i)}_{0} \le \ell(\mu^{(i)})\le \ell(\mu^{(i-1)})$. The slack sequence $\underline\t=(t^{(N)}_0\ge \dots \ge t^{(1)}_0)$ is weakly decreasing while the sequence $(\ell(\mu^{(N)})\le \dots \le \ell(\mu^{(1)})\le \ell(\mu^{(0)}) )$ is weakly increasing.

\item[(c)] $2n\ge Q[i]+2t_0^{(i)}=\ell(\mu^{(i-1)})+t_0^{(i)}\Leftrightarrow 2n-t_0^{(i)}\ge Q[i]+t_0^{(i)}=\ell(\mu^{(i-1)})$.

\item[(d)] $\r^{(i)}\le_\r \r^{(i-1)}$. The slack vector  sequence $\underline\r=(\r^{(N)}\le_\r\cdots\le_r\r^{(1)})$ is weakly increasing.
    \item[(e)] $\r^{(i)}\subseteq[1,\ell(\mu^{(i)}]\subseteq [1,\ell(\mu^{(i-1)}]\subseteq[1,2n-t_0^{(i)}]$.
\end{itemize}
\end{prop}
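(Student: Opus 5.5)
The plan is to read everything off the row structure of a single vertical strip and then feed in conditions $(R4)$ and $(R5)$ of Proposition \ref{prop:tilderec}; conditions $(R1)$--$(R3)$ are not needed beyond the fact that each $\mu^{(i-1)}/\mu^{(i)}$ is a vertical strip with $Q[i]$ cells.

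\emph{Step 1: items (a) and (e).} Fix $i$ and consider the strip $\mu^{(i-1)}/\mu^{(i)}$. Its cells fall into two groups:
\begin{itemize}
\item the $l_0^{(i)}$ cells with row coordinates in $[1,\ell(\mu^{(i)})]$;
\item cells in the rows $\ell(\mu^{(i)})+1,\dots,\ell(\mu^{(i-1)})$.
\end{itemize}
Each row in the second range is empty in $\mu^{(i)}$ and nonempty in $\mu^{(i-1)}$. Since the strip is vertical, each such row contains exactly one cell, in column $1$. Hence
$$Q[i]=l_0^{(i)}+\ell(\mu^{(i-1)})-\ell(\mu^{(i)}).$$
Substituting $t_0^{(i)}=\ell(\mu^{(i)})-l_0^{(i)}$ gives (a): $\ell(\mu^{(i-1)})=Q[i]+t_0^{(i)}$. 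The inequality $\ell(\mu^{(i-1)})\ge\ell(\mu^{(i)})$ is just $\mu^{(i)}\subseteq\mu^{(i-1)}$.

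\emph{Step 2: item (c).} Apply $(R4)$ with $k=i$, which reads $Q[i]\ge 2(\ell(\mu^{(i-1)})-n)$. Rewrite it as $2n\ge 2\ell(\mu^{(i-1)})-Q[i]$ and substitute (a). This gives $2n\ge Q[i]+2t_0^{(i)}=\ell(\mu^{(i-1)})+t_0^{(i)}$, which is (c). Item (e) then follows: $\r^{(i)}\subseteq[1,\ell(\mu^{(i)})]$ holds by definition, and $\ell(\mu^{(i)})\le\ell(\mu^{(i-1)})\le 2n-t_0^{(i)}$ by (a) and (c).

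\emph{Step 3: item (b).} Let $2\le i\le N$; the case $i=1$ is trivial since $t_0^{(0)}=0$. I use $(R5)$ with $k=i-1$ and $r=\ell(\mu^{(i-1)})$. At this $r$, every cell of the strip $\mu^{(i-1)}/\mu^{(i)}$ lies in row $\le r$, so $Q_{\le r}[i]=Q[i]$. Among the $Q[i-1]$ cells labelled $i-1$, those in rows $\le\ell(\mu^{(i-1)})$ are exactly $l_0^{(i-1)}$ many, so $Q_{\le r}[i-1]=l_0^{(i-1)}$. Thus $(R5)$ gives $Q[i]\le l_0^{(i-1)}$. Therefore
$$t_0^{(i-1)}=\ell(\mu^{(i-1)})-l_0^{(i-1)}\le\ell(\mu^{(i-1)})-Q[i]=t_0^{(i)},$$
using (a) in the last step. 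The remaining inequalities in (b), namely $t_0^{(i)}\le\ell(\mu^{(i)})\le\ell(\mu^{(i-1)})$, come from the definition of the slack and from Step 1.

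\emph{Step 4: item (d).} I expect this to be the main obstacle, because the componentwise order $\le_\r$ has to be extracted from $(R5)$, which only compares counts. The plan is to compare counting functions of the two slack sets. For $1\le r\le\ell(\mu^{(i)})$:
\begin{itemize}
\item the number of elements of $\r^{(i)}$ in $[1,r]$ is $r-Q_{\le r}[i]$;
\item the number of elements of $\r^{(i-1)}$ in $[1,r]$ is $r-Q_{\le r}[i-1]$.
\end{itemize}
Both formulas hold because each row meets a vertical strip at most once, and every row $\le r$ lies in $[1,\ell(\mu^{(i)})]\subseteq[1,\ell(\mu^{(i-1)})]$. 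By $(R5)$ the first count dominates the second for every such $r$.

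This dominance gives $r^{(i)}_j\le r^{(i-1)}_j$ whenever $r^{(i-1)}_j\le\ell(\mu^{(i)})$. Indeed, evaluating the counts at $r=r^{(i-1)}_j$ shows that $\r^{(i)}$ already has at least $j$ elements below $r^{(i-1)}_j$. If instead $r^{(i-1)}_j>\ell(\mu^{(i)})$, the inequality is automatic, provided $r^{(i)}_j$ exists, because $\r^{(i)}\subseteq[1,\ell(\mu^{(i)})]$. Existence of $r^{(i)}_j$ for every $j\le t_0^{(i-1)}$ is exactly the length condition $|\r^{(i)}|=t_0^{(i)}\ge t_0^{(i-1)}=|\r^{(i-1)}|$ from Step 3. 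Together these give $\r^{(i)}\le_\r\r^{(i-1)}$.
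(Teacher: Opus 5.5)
Your proof is correct, and it is considerably fuller than the paper's. The paper's proof is a single sentence: (d) ``follows from Definition~\ref{def:tei} and (a), (b), (c)''. The text before the proposition attributes the conditions on the slack numbers to $(R3)$, $(R4)$ and those on the slack vectors to $(R5)$. Your route is different:
\begin{itemize}
\item (a) comes from row-counting in a vertical strip;
\item (c) comes from $(R4)$;
\item (b) comes from $(R5)$ at $r=\ell(\mu^{(i-1)})$, which gives $Q[i]\le l_0^{(i-1)}$;
\item (d) comes directly from $(R5)$ through the identity $|\r^{(i)}\cap[1,r]|=r-Q_{\le r}[i]$ for $r\le\ell(\mu^{(i)})$, with (b) used only to guarantee that $r^{(i)}_j$ exists.
\end{itemize}

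This difference matters. Conditions (a)--(c) control only lengths and cardinalities, not positions, so (d) cannot follow from them and the definition alone. Take $n=3$ and $\mu=(2,1)\subset_{vert}\mu^{(1)}=(3,1,1)\subset_{vert}\lambda=(3,2,2,1,1)$. Put $2$ in cells $(1,3),(3,1)$ and $1$ in cells $(2,2),(3,2),(4,1),(5,1)$. This filling satisfies $(R1)$--$(R4)$ and (a)--(c) with $t_0^{(1)}=t_0^{(2)}=1$. Yet $\r^{(2)}=\{2\}$ and $\r^{(1)}=\{1\}$, so (d) fails; only $(R5)$ at $r=1$ excludes it.

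Likewise, (b) cannot come from $(R3)$, $(R4)$ alone. Take $n=4$ and $\mu=(1,1,1)\subset_{vert}(2,2,1,1,1)\subset_{vert}(2,2,2,2,2,1)$. Put $2$ in cells $(1,2),(2,2),(4,1),(5,1)$ and $1$ in cells $(3,2),(4,2),(5,2),(6,1)$. This satisfies $(R1)$--$(R4)$, but $t_0^{(1)}=2>t_0^{(2)}=1$.

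So your use of $(R5)$ is what actually proves the proposition. You should still state two trivial points explicitly. First, $t_0^{(i-1)}\ge 0$, since a vertical strip meets each row at most once. Second, the case $i=1$ of (d) is immediate, since $x\le_\r()$ for every $x$; your counting argument would otherwise refer to $Q_{\le r}[0]$.
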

\begin{proof} $(\rm d)$ follows from Definition \ref{def:tei} and $(\rm a),(\rm b),(\rm c)$.
\end{proof}

Notably, condition $(\rm d)$ above guarantees that reverse Schensted column insertion can  correctly be $\r$-iterated to define ${\LRAII^{AII}}^{-1}$ as required in Remarks 1 and 2 in \cite{azreco}.

\begin{obs} \label{re:slack} \begin{enumerate}

\item When the slack sequence of $Q$ is the null vector then the conditions on $Q$ are just $\ell(\mu^{(i-1)})=Q[i]\ge \ell(\mu^{(i)})$ and
$2n\ge Q[i]$, for $i=1,\dots, N$. Therefore, $\lambda=\mu+\sum_{i=1}^N\varpi_{Q[i]}$.

\item The admissible slacks for  $n=1,2, 3$ are as follows. Let $N\ge 1$  and $1\le  i\le N$. Proposition \ref{prop:slack} $(b)$ forces $ t_0^{(i)}=0\Rightarrow t_0^{(j)}=0$ for all $i\ge j$.

For $n=1$,   Proposition \ref{prop:slack} $(c)$ implies $2\ge Q[i]+2t_0^{(i)}\ge 2 +2t_0^{(i)}\Leftrightarrow t_0^{(i)}=0$. Hence $Q[i]=2$ and $t_0^{(i)}=0$.

For $n=2$,  Proposition \ref{prop:slack} $(c)$ implies $4\ge Q[i]+2t_0^{(i)}\ge 2 +2t_0^{(i)}\Leftrightarrow 2\ge 2t_0^{(i)}\Leftrightarrow 1\ge t_0^{(i)}$.
Hence either $Q[i]=4$ and $t_0^{(i)}=0$ or $Q[i]=2$ and $t_0^{(i)}=0,1$. Note, $t_0^{(i)}=1\Rightarrow 2\times 2-1\ge Q[i]+1\Rightarrow Q[i]=2$. In particular $4\notin \r^{(i)}$, for any $i$.

For $n=3$,  Proposition \ref{prop:slack} $(c)$ implies $6\ge Q[i]+2t_0^{(i)}\ge 2 +2t_0^{(i)}\Leftrightarrow 2\ge t_0^{(i)} $.
Hence either $Q[i]=6$ and $t_0^{(i)}=0$ or $Q[i]=4$ and $t_0^{(i)}=0,1$ or { $Q[i]=2$} and $t_0^{(i)}=0,1,2$.  In particular $6\notin \r^{(i)}$, for any $i$.
\end{enumerate}
\end{obs}

\begin{ex}\label{ex:vectorslackn=3} Let $n=3$, $S=\YT{0.2in}{}{
 {2},
 {3},
 {6},
} \in SpT_6(\varpi_3)$ and $\underline \t=(2,2,1,1,0)$, $\underline\r=[(2,3);(3,4);4;5;()]$ and $$Q=\YT{0.12in}{}{
 {,5,4,3,2,1},
 {,4,3,2,1},
 {,3,2,1},
 {5,2,1},
 {3,1},
 {1},
} \in Rec_{6}((6,5,4,3,2,1)/(1,1,1))\quad
\rm\delta_{\underline\r}=\begin{bmatrix}
0&0&0&0&0\\
1&0&0&0&0\\
1&1&0&0&0\\
0&1&1&0&0\\
0&0&0&1&0\\
0&0&0&0&0\\
\end{bmatrix}_{6\times 5}\quad (2,3)\ge_{\r}(3,4)\ge_{\r}4\ge_{\r}5\ge_{\r}()$$

\begin{align*}& \c \circ (\rm{red}_{2}^{-1},\rm{id})\circ (\underset{{(2,3)}}\leftarrow \YT{0.12in}{}{
 {2},
 {3},
 {6},
} )\nonumber\\
&=\c\circ (\rm{red}_{2}^{-1},\rm{id})((36), 2)=\YT{0.12in}{}{
 {1,2},
 {2},
 {3},
 {6},
}=S^{(2,3)}\nonumber\\
&\c \circ (\rm{red}_{2}^{-1},\rm{id})\circ (\underset{{(3,4)}}\leftarrow S^{(2,3)})\nonumber\\
&=\c\circ (\rm{red}_{2}^{-1},\rm{id})((36), \YT{0.12in}{}{
 {1,2},
 {2},
})=\YT{0.12in}{}{
 {1,1,2},
 {2,2},
 {3},
 {6},
}=S^{(2,3),(3,4)}\\
\end{align*}
\begin{align*}
& \c \circ (\rm{red}_{2}^{-1},\rm{id})\circ (\underset{{4}}\leftarrow S^{(2,3),(3,4)})\\
&=\c \circ (\rm{red}_{2}^{-1},\rm{id})(6,\YT{0.12in}{}{
 {1,1,2},
 {2,2},
 {3},
 }
 )=\YT{0.12in}{}{
 {1,1,1,2},
 {2,2,2},
 {3,3},
 {4},
 {6},
}=S^{(2,3),(3,4),4}
\end{align*}
\begin{align*}
& \c \circ (\rm{red}_{2}^{-1},\rm{id})\circ (\underset{{5}}\leftarrow S^{(2,3),(3,4),4})\\
&=\c \circ (\rm{red}_{2}^{-1},\rm{id})(6,\YT{0.12in}{}{
 {1,1,1,2},
 {2,2,2},
 {3,3},
 {4},
 }
 )=\YT{0.12in}{}{
 {1,1,1,1,2},
 {2,2,2,2},
 {3,3,3},
 {4,4},
 {6},
}=S^{(2,3),(3,4),4,5}
\end{align*}
\begin{align*}
& \c \circ (\rm{red}_{0}^{-1},\rm{id})\circ (\underset{{()}}\leftarrow S^{(2,3),(3,4),4,5})\\
 &=\YT{0.12in}{}{
 {1,1,1,1,1,2},
 {2,2,2,2,2},
 {3,3,3,3},
 {4,4,4},
 {5,6},
 {6},
}=S^{(2,3),(3,4),4,5,()}\in SST_{6}(6,5,4,3,2,1).
\end{align*}
The output tableau is a $\k$-highest weight tableau \cite{nsw} as explained in Section \ref{sec:kk}.
\end{ex}
The following considers the case where no bumping is needed, that is, the slack sequence is the null vector, $\underline\t=(t^{(N)}_0\ge \dots \ge t^{(1)}_0)=(0,\dots,0)$. When the slack number is $0$ the corresponding slack vector is written $\r=()$. In this case, then \emph{}{slack} \emph{vector} {sequence} is written $\underline\r=((),\dots,())$. It follows from Proposition \ref{prop:slack} and the iteration of Remark \ref{nullslack}.

 \begin{lem}\cite{azreco}  \label{lem:recordhole0} Let $S\in SpT_{2n}(\mu)$ and  let $Q\in  Rec_{2n}(\lambda/\mu)$ such that $\lambda=\mu+\sum_{i=1}^{\nu_1}\varpi_{Q[i]}$
 with $Q[1]\ge\cdots\ge Q[N]\ge \ell(\mu)$  and $\nu=( Q[1],\dots,Q[N])^t$ an even partition.

Then \begin{align}{\LRAII^{AII}}^{-1}(S, Q)=Y(\nu)\bigcdot S \in SST_{2n}(\lambda)
\end{align}
where $Y(\nu)$ is the Yamanouchi tableau of shape $\nu$.
\end{lem}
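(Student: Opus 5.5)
We argue by induction on $N=\nu_1$, the number of vertical strips of $Q$, using the iterated formula \eqref{Rtilde} for ${\LRAII^{AII}}^{-1}$ and the null-slack case in Remark \ref{nullslack}.

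\emph{Slack data.} The hypothesis $\lambda=\mu+\sum_{i=1}^{N}\varpi_{Q[i]}$ with $Q[1]\ge\cdots\ge Q[N]\ge\ell(\mu)$ determines the chain \eqref{slack}. Each strip has $\mu^{(i-1)}=\mu^{(i)}+\varpi_{Q[i]}$, so $\ell(\mu^{(i-1)})=Q[i]\ge\ell(\mu^{(i)})$. Since $\mu^{(i-1)}/\mu^{(i)}$ is the full column of rows $1,\dots,Q[i]$, it occupies every row in $[1,\ell(\mu^{(i)})]$. Hence $l_0^{(i)}=\ell(\mu^{(i)})$ and the slack is $t_0^{(i)}=0$. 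So $\r^{(i)}=()$ for all $i$, which is consistent with Proposition \ref{prop:slack}(a) and Remark \ref{re:slack}(1). Each factor in \eqref{Rtilde} is therefore $\c\circ(\redu_0^{-1},\mathrm{id})\circ(\underset{\mu^{(i)}/\mu^{(i)}}{\leftarrow}\,\cdot\,)$.

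\emph{Action of each factor.} By Remark \ref{nullslack}, $(\underset{\mu^{(i)}/\mu^{(i)}}{\leftarrow}R)=((),R)$ and $\redu_0^{-1}(())=(1\,2\cdots Q[i])$, where $Q[i]\in 2\Z$ by (R3). So each factor sends a tableau $R$ to the plactic product $(1\,2\cdots Q[i])\bigcdot R$. Iterating from $i=N$ down to $i=1$ gives
\[
{\LRAII^{AII}}^{-1}(S,Q)=(1\cdots Q[1])\bigcdot(1\cdots Q[2])\bigcdot\cdots\bigcdot(1\cdots Q[N])\bigcdot S .
\]

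\emph{Identifying the Yamanouchi factor.} The column words $(1\cdots Q[1])(1\cdots Q[2])\cdots(1\cdots Q[N])$ concatenated in this order form a column reading word of $Y(\nu)$: the columns of $Y(\nu)$ are $1\cdots\nu^t_k=1\cdots Q[k]$, and they are weakly decreasing in length. In the plactic monoid the product of these columns is therefore $Y(\nu)$. Associativity of the plactic product then gives $Y(\nu)\bigcdot S$.

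\emph{The step needing care.} The main point is the shape. We must check that $Y(\nu)\bigcdot S$, i.e.\ the rectification of the concatenation, has shape exactly $\lambda$ and not merely lies in $SST_{2n}$. Because $Q[N]\ge\ell(\mu)$, the column $1\cdots Q[N]$ is at least as long as the first column of $S$. Every entry $S(k,j)\ge k$, so the juxtaposition of $Y(\nu)$ to the left of $S$ is already semistandard: the row conditions hold since the entries of $Y(\nu)$ in row $k$ equal $k\le S(k,1)$. Thus the plactic product is this juxtaposed tableau, whose row lengths are $\nu_k+\mu_k$, which is exactly $\lambda=\mu+\sum_i\varpi_{Q[i]}$. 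This can equally be seen step by step, where each step of the induction is the $N=1$ case of Remark \ref{nullslack}, and that case applies because $Q[i]\ge\ell(\mu^{(i)})$.
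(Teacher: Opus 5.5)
Your proof is correct and follows the paper's route. The paper likewise treats the hypothesis as the null-slack case, so each factor in \eqref{Rtilde} is the one-strip step of Remark \ref{nullslack}. Iterating gives the plactic product of the columns $1\cdots Q[i]$, i.e.\ $Y(\nu)\bigcdot S$, and your juxtaposition check confirms the shape $\lambda$.

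The one point you assert rather than prove is that the shape hypothesis forces $\mu^{(i-1)}=\mu^{(i)}+\varpi_{Q[i]}$. This is easily filled in:
\begin{itemize}
\item For $j\le\ell(\mu)\le Q[N]$, the hypothesis gives $\lambda_j-\mu_j=N$, so by (R1) row $j$ of $Q$ reads $N,\dots,1$.
\item Hence every row $j\le\ell(\mu)$ contains an $N$, so $t_0^{(N)}=0$.
\item All slacks then vanish by Proposition \ref{prop:slack}(b).
\item Finally $\ell(\mu^{(i-1)})=Q[i]$ by Proposition \ref{prop:slack}(a), so each strip fills rows $1,\dots,Q[i]$.
\end{itemize}
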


Next one considers constant slack  sequences   $\underline\t=(t^{(N)}_0\ge \dots \ge t^{(1)}_0)=(1,1,\dots,1,0,\dots,0)$ possibly with a tail of zeroes. Henceforth, the slack vector sequence $\underline\r$ is an increasing sequence of numbers possibly with a corresponding tail of emptysets.
 From Theorem \ref{thm:verygeneralstrip}, \eqref{inverse}, Proposition \ref{prop:slack}, Lemma \ref{lem:recordhole0} and Remark 1 in \cite{azreco}  one has.

\begin{thm}\cite{azreco}\label{lem:recordhole1} Let $S\in SpT_{2n}(\mu)$ and let
 $Q\in   Rec_{2n}(\lambda/\mu)$  defined by the sequence of vertical strips
\begin{align}&\mu^{(0)}=\lambda\supset_{vert} \mu^{(1)}\supset_{vert} \cdots\supset_{vert}\mu^{(N-1)}\supset_{vert}\mu^{(N)} =\mu
\end{align}
such that $|\mu^{(i-1)}/\mu^{(i)}|=Q[i]\ge 2$ and $\ell(\mu^{(i-1)})=Q[i]+1$, $1\le i\le N$  and $( Q[1],\dots,Q[N])^t$ is an even partition.
For $i=1,\dots, N$, let  $ \r_i\in [1,\ell(\mu^{(i)})]$ be  the sole row index of $\mu^{(i)}$ not  a cell row coordinate  of the vertical strip $\mu^{(i-1)}/\mu^{(i)}$, or $\r_i=()$. Then,
\begin{enumerate}
\item for $1\le i\le N$,

\begin{itemize}
\item[(i)] $2n-1\ge Q[i]+1=\ell(\mu^{(i-1)})$, whenever $\r_i\neq ()$. 
 \item [(ii)]$\r_{N}\le\cdots\le \r_1$, $1\le \r_i\le \ell(\mu^{(i)})$, $1\le i\le N$.
and $\r_i\le 2n-1$ whenever $\r_i\neq ()$. (Here, for simplicity, since $\r_i\neq ()$ is a single set we abuse notation and identify $\r_i$ with its single element.)
\end{itemize}
\item
\begin{align}{\LRAII^{AII}}^{-1}(S, Q)=S^{\underline \r}=S^{\r_N\cdots\r_2 \r_1}:=
{({(\cdots{((S^{\r_N})}^{\r_{N-1}}){\cdots})}^{\r_2})}^{\r_1}.
\end{align}
\end{enumerate}

\end{thm}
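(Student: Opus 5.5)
The plan is to specialise Proposition \ref{prop:slack} to the slack sequence with every $t_0^{(i)}\in\{0,1\}$, which gives part (1), and then to obtain part (2) by induction on the number $N$ of vertical strips. Each inductive step applies Theorem \ref{thm:verygeneralstrip} once.

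\textbf{Part (1).} Here $\ell(\mu^{(i-1)})=Q[i]+1$. Compare with Proposition \ref{prop:slack}(a): when $\r_i\neq()$ we have $t_0^{(i)}=1$, so the strip $\mu^{(i-1)}/\mu^{(i)}$ misses exactly one row of $[1,\ell(\mu^{(i)})]$, and this row is $\r_i$. When $\r_i=()$ we have $t_0^{(i)}=0$.
\begin{itemize}
\item For (i), Proposition \ref{prop:slack}(c) with $t_0^{(i)}=1$ gives $2n-1\ge Q[i]+1=\ell(\mu^{(i-1)})$.
\item For (ii), the bound $1\le \r_i\le \ell(\mu^{(i)})$ is Proposition \ref{prop:slack}(e), and $\r_i\le 2n-1$ then follows from (i).
\item The monotonicity $\r_N\le\cdots\le\r_1$ is Proposition \ref{prop:slack}(d), read for singletons. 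The definition of $\le_\r$ says $x\le_\r y$ means $x_1\le y_1$ whenever $y$ is nonempty.
\item By Proposition \ref{prop:slack}(b), the slack sequence is weakly decreasing. Hence the empty entries $\r_i=()$ occur only at the end, for small indices $i$, and the chain is consistent with the convention $x\le_\r()$.
\end{itemize}

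\textbf{Part (2).} For $N=1$ this is exactly Theorem \ref{thm:verygeneralstrip} with $t_0=1$ or $t_0=0$; the case $t_0=0$ is Remark \ref{nullslack}. For $N>1$, write $Q=Q'\cup Q''$, where:
\begin{itemize}
\item $Q''\in Rec_{2n}(\mu^{(1)}/\mu)$ is the restriction of $Q$ to the strips $2,\dots,N$, with entries shifted down by one;
\item $Q'$ is the one-strip recording tableau of shape $\lambda/\mu^{(1)}$.
\end{itemize}
Conditions (R1)--(R5) of Proposition \ref{prop:tilderec} are inherited by $Q''$ and $Q'$: (R4) and (R5) only involve partial counts, and the partition $\mu^{(1)}$ has at most $n$ parts. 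The last point needs checking, and I will derive it from the inequality $\ell(\mu^{(1)})\le 2n-\ell(\mu^{(1)})$ given by (c).

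Next I will use the recursive definition \eqref{Rtilde} of ${\LRAII^{AII}}^{-1}$ as an iterated composition of the basic operation $\c\circ(\rm{red}^{-1}_{t_0},\rm{id})\circ(\leftarrow)$. It gives
\[{\LRAII^{AII}}^{-1}(S,Q)={\LRAII^{AII}}^{-1}\big({\LRAII^{AII}}^{-1}(S,Q''),Q'\big).\]
By induction, ${\LRAII^{AII}}^{-1}(S,Q'')=S^{\r_N\cdots\r_2}$, and one further application of Theorem \ref{thm:verygeneralstrip} with slack vector $\r_1$ yields the claim.

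\textbf{Main obstacle.} The delicate point is that Theorem \ref{thm:verygeneralstrip} requires its input to be symplectic of shape $\mu^{(1)}$. However, $S^{\r_N\cdots\r_2}$ is only known to be semistandard of shape $\mu^{(1)}$. So I must show that the intermediate output is symplectic, or else that the reverse column bump along row $\r_1$ still pulls out a symplectic single entry and leaves a consistent remainder. This is where monotonicity (ii) is used, following Remarks 1 and 2 of \cite{azreco}. Since $\r_1\ge\r_2$, the reverse bumping route for $\r_1$ lies weakly right of and below the previous one, so it stays inside the columns $\ge 2$ produced by the concatenation $T\bigcdot S'_{\ge 2}$. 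The bumped entry then comes from a portion of the tableau controlled by $S'_{\ge2}$, which is symplectic. The length condition $\ell(\mu^{(i-1)})\le 2n-1$ then guarantees that $\rm{red}_1^{-1}$ applied to that single entry is defined. I would verify this route comparison with the standard row-bumping lemma in \cite{fulton}.
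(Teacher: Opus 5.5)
Your Part (1) is correct and matches the paper, which obtains it by specialising Proposition \ref{prop:slack} to slacks in $\{0,1\}$. Your outline for Part (2) is also the paper's: iterate the one-strip inverse along $\r_N,\dots,\r_1$ as in \eqref{Rtilde}, and use the monotonicity (ii) together with Remark 1 of \cite{azreco} to justify the iteration. However, the inductive step does not go through as you set it up.

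\textbf{The intermediate shapes need not have at most $n$ rows.} The claim $\mu^{(1)}\in Par_{\le n}$ is false in general. Proposition \ref{prop:slack}(c) gives only $\ell(\mu^{(0)})+t_0^{(1)}\le 2n$, not $\ell(\mu^{(1)})\le 2n-\ell(\mu^{(1)})$. Example \ref{n=2}(2) has $n=2$ and slack sequence $(1,\dots,1,0)$, yet $\mu^{(1)}=(9,7,4)$. Already after the first backward step, $S^{1}$ there has shape $(4,3,1)$ and first column $(2,3,4)$. Consequences:
\begin{itemize}
\item $Q'\notin Rec_{2n}(\lambda/\mu^{(1)})$ and $SpT_{2n}(\mu^{(1)})=\emptyset$.
\item Your recursion ${\LRAII^{AII}}^{-1}(S,Q)={\LRAII^{AII}}^{-1}({\LRAII^{AII}}^{-1}(S,Q''),Q')$ is undefined.
\item Your first option, proving the intermediate tableau symplectic, is impossible, so Theorem \ref{thm:verygeneralstrip} cannot be invoked at the inductive step.
\end{itemize}
What \eqref{Rtilde} iterates is the basic operation $\c\circ(\redu^{-1}_{t_0},\rm{id})\circ(\leftarrow)$ applied to semistandard, non-symplectic tableaux.

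\textbf{The fallback route argument is misdirected.} A reverse column insertion always ends by ejecting a letter from column $1$. At that stage column $1$ is the expanded column $\redu^{-1}(\cdot)$ of length $\ell(\mu^{(i)})$ built at the previous step, such as $(2,3,4)$ above. It is not part of $S'_{\ge 2}$, and $S'_{\ge 2}$ is in any case symplectic only at the first backward step. Moreover, since $t_0^{(i)}\le 1$, the ejected column is a single letter, and every letter of $[1,2n]$ is a symplectic column. So symplecticity of the bumped entry is not where the difficulty lies.

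What you actually need at step $i$ with $\r_i\neq()$ is three things:
\begin{itemize}
\item $\redu_1^{-1}$ into length $\ell(\mu^{(i-1)})=Q[i]+1$ exists, by parity from (R3) and the bound in (1)(i).
\item The single reverse bump from the cell $\mu^{(i)}/(\mu^{(i-1)}-\varpi_{\ell(\mu^{(i-1)})})$, which lies in row $\r_i$, undoes the corresponding step of Watanabe's forward algorithm on the intermediate tableau.
\item The concatenation rebuilds shape $\mu^{(i-1)}$.
\end{itemize}
The middle point is exactly what the paper delegates to Remark 1 of \cite{azreco} together with Proposition \ref{prop:slack}(d). Alternatively, grant that $LR^{AII}$ is a bijection obtained by iterating this one-step map. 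Then a downward induction along the forward run of the unique preimage of $(S,Q)$ gives the formula without any comparison of bumping routes.
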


More generally from Theorem  \ref{thm:verygeneralstrip}, \eqref{inverse},  Proposition \ref{prop:slack}, Theorem \ref{lem:recordhole0}, Lemma \ref{lem:recordhole1} and Remark 1 in \cite{azreco}.

\begin{thm}\label{cor:generalhole1} Let $S\in SpT_{2n}(\mu)$ and let
 $Q\in   Rec_{2n}(\lambda/\mu)$  defined by the sequence of vertical strips
\begin{align}&\mu^{(0)}=\lambda\supset_{vert} \mu^{(1)}\supset_{vert} \cdots\supset_{vert}\mu^{(N-1)}\supset_{vert}\mu^{(N)} =\mu
\end{align}
with slack sequence  $\underline \t=(t^{(N)}_0\ge \dots \ge t^{(1)}_0)$ such that $|\mu^{(i-1)}/\mu^{(i)}|=Q[i]\ge 2$ and $\ell(\mu^{(i-1)})=Q[i]+t_0^{(i)}$, $1\le i\le N$  and $( Q[1],\dots,Q[N])^t$ is an even partition. Let  $\underline\r=[\r^{(N)},\dots,\r^{(1)}]$ be the
{slack} (row index) {vector} {sequence} of $Q$.
Then
\begin{enumerate}
\item for $1\le i\le N$,
\begin{itemize}
\item[(i)] $\r^{(N)}\le_\r\cdots\le_\r \r^{(1)}$ and $\r^{(i)}\subseteq[1,\ell(\mu^{(i)}]\subseteq [1,\ell(\mu^{(i-1)}]\subseteq[1,2n-t_0^{(i)}]$.
\item [(ii)]
$2n-t_0^{(i)}\ge Q[i]+t_0^{(i)}=\ell(\mu^{(i-1)}$.
\end{itemize}
\item
\begin{align}{\LRAII^{AII}}^{-1}(S, Q)=S^{\underline \r}=S^{\r^{(N)}\cdots\r^{(2)} \r^{(1)}}:=
{\bigg({\bigg(\cdots{\big(\big(S^{\r^{(N)}}\big)}^{\r^{(N-1)}}\big)\cdots\bigg)}^{\r^{(2)}}   \bigg)}^{\r^{(1)}}.
\end{align}
\end{enumerate}

\end{thm}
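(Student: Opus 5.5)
Part (1) is read off directly from Proposition \ref{prop:slack}. Item (ii) is the content of Proposition \ref{prop:slack} (a) and (c). Item (i) is the combination of (d) and (e) of that proposition.

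The only thing to check in (1) is the hypothesis of (d), namely that the slack vectors are nested in the sense of $\le_\r$. I would prove it from (R5) of Proposition \ref{prop:tilderec}. Suppose a row $s\le \ell(\mu^{(i-1)})$ carries no cell of the strip $\mu^{(i-1)}/\mu^{(i)}$ (entry $i$ in $Q$), but does carry a cell of the strip $\mu^{(i)}/\mu^{(i+1)}$ (entry $i+1$), with $s\le\ell(\mu^{(i)})$. Then the counts $Q_{\le r}[\cdot]$, combined with the column decrease (R2) and the row strict decrease (R1), force the comparison of the sorted slack rows entrywise. By (b), the lengths $t_0^{(i)}$ are weakly decreasing in $i$, which is the shape part of $\le_\r$.

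For part (2), I would argue by induction on the number $N$ of vertical strips, peeling off the innermost strip $\mu^{(N-1)}/\mu^{(N)}$.

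Base case $N=1$. This is exactly Theorem \ref{thm:verygeneralstrip}, giving ${\LRAII^{AII}}^{-1}(S,Q)=S^{\r^{(1)}}$.

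Inductive step. Let $Q'$ be the tableau of shape $\lambda/\mu^{(N-1)}$ obtained by deleting the entries $N$ and keeping the strips $J_1,\dots,J_{N-1}$. The restricted data still satisfy (R1)–(R5), since the conditions are local to the strips and to the counts above row $r$. Hence $Q'\in Rec_{2n}(\lambda/\mu^{(N-1)})$, and its slack vector sequence is $[\r^{(N-1)},\dots,\r^{(1)}]$. Also, the single strip $\mu^{(N-1)}/\mu$ gives a $1$-strip recording tableau $Q_N$ with slack vector $\r^{(N)}$. Theorem \ref{thm:verygeneralstrip} then gives $S^{\r^{(N)}}\in SST_{2n}(\mu^{(N-1)})$.

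The key point is that the algorithm \eqref{Rtilde} is literally an iteration of the single-strip operation $\c\circ(\redu^{-1}_{t_0},\mathrm{id})\circ(\leftarrow)$. Therefore \eqref{Rtilde} evaluated on $(S,Q)$ coincides with \eqref{Rtilde} evaluated on the intermediate tableau and $Q'$. So it remains to show that this iteration really is ${\LRAII^{AII}}^{-1}$, that is, that $\LRAII^{AII}$ factors through the strips compatibly. I would obtain this from Remark 1 (and 2) of \cite{azreco}, which assert that Watanabe's $\LRAII^{AII}$ is computed strip by strip: its last step produces $(P,Q_N)$ from an intermediate tableau whose own recording tableau is $Q'$. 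Those remarks require that the reverse column-insertion routes prescribed by successive slack vectors be compatible. This is exactly the monotonicity $\r^{(N)}\le_\r\cdots\le_\r\r^{(1)}$ from (1)(i), together with the bound $\r^{(i)}\subseteq[1,2n-t_0^{(i)}]$. The bound guarantees that $\redu^{-1}_{t_0^{(i)}}$ is defined on the bumped column, since that column is symplectic, being a subset of the first column of a symplectic tableau.

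The main obstacle is this compatibility in the inductive step. After $S^{\r^{(N)}}=T.S'_{\ge2}$ is formed, it is no longer symplectic. We must check that the next reverse bumping along $\r^{(N-1)}$ extracts exactly a symplectic column of length $t_0^{(N-1)}$ from the first column, and leaves a tableau of shape $\mu^{(N-2)}$ minus the appropriate strip. I would first try to control the reverse bumping routes using the row bumping lemma (\cite{fulton}): weakly increasing row indices produce routes that move weakly rightward. This should let the earlier inserted column $T$ be separated from the bumped entries. Lemma \ref{lem:recordhole0} and Theorem \ref{lem:recordhole1} serve as the special cases $t_0\equiv0$ and $t_0\le1$, which check that the mechanism is consistent.
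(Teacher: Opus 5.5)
Your route is the paper's: (1) is read off from Proposition \ref{prop:slack}, and (2) comes from iterating Theorem \ref{thm:verygeneralstrip} using the strip-by-strip description of $\LRAII^{AII}$ in Remark 1 of \cite{azreco}. For (1), (R5) is indeed the right input, but your sketch does not yet give the argument, and (R1), (R2) are not needed. Let $A_i$ be the set of rows of $J_i=\mu^{(i-1)}/\mu^{(i)}$. Then (R5) says $|A_{i+1}\cap[1,r]|\le|A_i\cap[1,r]|$. Since $(\ell(\mu^{(i+1)}),\ell(\mu^{(i)})]\subseteq A_{i+1}\subseteq[1,\ell(\mu^{(i)})]$, taking complements gives $|\r^{(i+1)}\cap[1,r]|\ge|\r^{(i)}\cap[1,r]|$ for all $r\le\ell(\mu^{(i)})$. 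Setting $r=\ell(\mu^{(i)})$ gives $t_0^{(i+1)}\ge t_0^{(i)}$, and setting $r=r^{(i)}_j$ gives $r^{(i+1)}_j\le r^{(i)}_j$; together these are $\r^{(i+1)}\le_\r\r^{(i)}$. Note that $t_0^{(i)}$ is weakly \emph{increasing} in $i$, not decreasing as you wrote.

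The genuine gap is in (2): your induction has no usable inductive hypothesis. First, $Q'$ need not lie in $Rec_{2n}(\lambda/\mu^{(N-1)})$, because $\mu^{(N-1)}$ can have more than $n$ rows; in Example \ref{ex:vectorslackn=3}, $\mu^{(4)}=(2,1,1,1)$ with $n=3$. Second, $S^{\r^{(N)}}$ is not symplectic, whereas Theorem \ref{thm:verygeneralstrip} and ${\LRAII^{AII}}^{-1}$ only accept symplectic first arguments. So all the content sits in the step you call the main obstacle, which you leave open with a tentative bumping-route plan.

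The missing idea is that no route analysis is needed once you use bijectivity. Let $T={\LRAII^{AII}}^{-1}(S,Q)$, which exists because $\LRAII^{AII}$ is a bijection. Let $S^{(0)}=T,\dots,S^{(N)}=S$ be the intermediate tableaux of the strip-by-strip computation you already cite, with $S^{(i)}=\redu(S^{(i-1)}_1)\to S^{(i-1)}_{\ge 2}$ of shape $\mu^{(i)}$. Since $S^{(i-1)}_{\ge2}$ has shape $\mu^{(i-1)}-\varpi_{\ell(\mu^{(i-1)})}$, the cells created at step $i$ are exactly the strip $\mu^{(i)}/(\mu^{(i)}-\delta_{\r^{(i)}})$. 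Invertibility of Schensted column insertion then shows that reverse insertion along $\r^{(i)}$, largest row first, returns $(\redu(S^{(i-1)}_1),S^{(i-1)}_{\ge 2})$. This holds whether or not $S^{(i)}$ is symplectic. Injectivity of $\redu$ on $SST_{2n}(\varpi_{\ell(\mu^{(i-1)})})$ then recovers $S^{(i-1)}_1$, and concatenating the two pieces gives back $S^{(i-1)}$. The extracted column is automatically symplectic, being a $\redu$-image. Descending induction on $i$ gives $S^{\r^{(N)}\cdots\r^{(i)}}=S^{(i-1)}$, and $i=1$ is (2). In this argument the monotonicity (1)(i) is not used.
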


\section{  The  inverse quantum LR map as a way to compute $\mathfrak{k}$-highest and lowest weight tableau pairs }\label{sec:kk}

We now apply the explicit inverse of the quantum Littlewood-Richardson map to  compute  $\mathfrak{k}$-highest or lowest weight tableaux in the
 proof of the Naito–Sagaki conjecture via the branching rule for $\imath$quantum groups \cite{nsw}.
We closely follow the notation in \cite{nsw} and refer to it for details.
 Let $n\in\mathbb{N}$ and let us consider the two sequences of positive integers defined in \cite{nsw}

\begin{align}&u_k=2k-\frac{1+(-1)^k}{2}\label{numbers:u}\\
&v_k=2k-\frac{1+(-1)^{k+1}}{2},~~ k=1,\dots,n.\label{numbers:v}
\end{align}
These two sequences $\{u_i\}_{i=1}^n\subseteq [1,2n]$ and $\{v_i\}_{i=1}^n\subseteq [1,2n]$  have no common values and its union  gives  $\{u_i\}_{i=1}^n\sqcup\{v_i\}_{i=1}^n=[1,2n]$.

\begin{ex} For $n=6$, $u_i=2,3,6,7,10,11$ and $v_i=1,4,5,8,9,12$, $1\le i\le 6$, and $\{u_i\}_{i=1}^6\cup \{v_i\}_{i=1}^6=\{1,2,\dots,12\}$; and for $n=7$, $u_i=2,3,6,7,10,11, 14$ and $v_i=1,4,5,8,9,12,13$, $1\le i\le 7$,
and $\{u_i\}_{i=1}^6\cup \{v_i\}_{i=1}^6=\{1,2,\dots,14\}$.
\end{ex}

For $S\in SST_{2n}(\lambda)$, its $\mathfrak{k}$-weight is \cite[Section 5.3]{nsw}
\begin{align}\wt_\mathfrak{k}(S)=(S[u_1]-S[v_1])\tilde\varepsilon_1+(S[u_2]-S[v_2])\tilde\varepsilon_2+\dots+(S[u_n]-S[v_n])\tilde\varepsilon_n.\label{kweight}
\end{align}


The set $\widetilde P^+=\{\mu_1 \tilde\varepsilon_1+\cdots+\mu_n \tilde\varepsilon_n\in\widetilde P:\mu_1\ge\cdots\ge \mu_n\ge 0\}$ can be identified with the set $Par_{\le n}$ \cite[Section 5.1]{nsw}

If $S$ is  the symplectic tableau below on the LHS \eqref{symphw-lw}  then the shape is equal to  the  corresponding $\mathfrak{k}$-weight,
  $wt_\mathfrak{k}(S)=(S[u_1],S[u_2],\dots,S[u_n])$,  and if $S$ is on the RHS \eqref{symphw-lw} then the $\mathfrak{k}$-weight is $(-S[v_1],-S[v_2],\dots,-S[v_n])$  and the shape is
  $$w_0wt_\mathfrak{k}(S)=w_0(-S[v_1],-S[v_2],\dots,-S[v_n])=(S[v_1],S[v_2],\dots,S[v_n])$$
  \noindent where $w_0$ is the longest element of the Weyl group of the Lie algebra $\mathfrak{k}$ isomorphic to the symplectic Lie algebra $\mathfrak{sp}_{2n}(\mathbb{C})$,

\begin{align}\label{symphw-lw}
&\YT{0.2in}{}{
 {{u_1},{u_1},\cdots,\cdots,\cdots,\cdots,u_1},
 {{u_2},\cdots,\cdots,\cdots,\cdots,{u_2}},
 {{$\vdots$},\vdots,\vdots,{$\vdots$}},
 {u_n,\cdots,u_n},
} \qquad
\YT{0.2in}{}{
 {{v_1},{v_1},\cdots,\cdots,\cdots,v_1},
 {{v_2},\cdots,\cdots,\cdots,{v_2}},
 {{$\vdots$},\vdots,\vdots,{$\vdots$}},
 {v_n,\cdots,v_n},
}  \\
&\mbox{ \scriptsize symplectic $\mathfrak{k}$-highest weight tableau}\qquad \mbox{ \scriptsize symplectic $\mathfrak{k}$-lowest weight tableau}
\nonumber
\end{align}

Let $SST_{2n}$ denote the set of all semi-standard tableaux in  the alphabet $[1,2n]$ and $SpT_{2n}$ its subset  of symplectic tableaux.
Next, $\mathfrak{k}$-highest weight tableaux and $\mathfrak{k}$-lowest weight tableaux in $SST_{2n}$ are defined.
\begin{defi} \cite{nsw} Let $S\in SST_{2n}$.
\begin{enumerate}
\item
$S$ is called a $\mathfrak{k}$-highest weight tableau if $P^{AII}(S)$ is a symplectic tableau of the form shown in the LHS of \eqref{symphw-lw}; let $SST^{\mathfrak{k}-hw}
_{2n}(\lambda)\subseteq SST_{2n}(\lambda)$ denote the set of all $\mathfrak{k}$-highest weight tableaux of shape $\lambda$.

\item
$S$ is called a $\mathfrak{k}$-lowest weight tableau if $P^{AII}(S)$ is a symplectic tableau of the form shown in RHS of \eqref{symphw-lw}; let $SST^{\mathfrak{k}-lw}
_{2n}(\lambda)\subseteq SST_{2n}(\lambda)$ denote the set of all $\mathfrak{k}$-lowest weight tableaux of shape $\lambda$.
\end{enumerate}
\end{defi}

Since $P^{AII}(S)=S$ for $S\in SpT_{2n}$, this definition for $S\in SpT_{2n}$ obviously restricts to \eqref{symphw-lw}.
For $n=1,2,3$, the symplectic $\mathfrak{k}$-highest weight respectively $\mathfrak{k}$-lowest weight tableaux are then respectively of the form
\begin{align}\label{1symphw-lw2}n=1:~~
&S^H=\YT{0.2in}{}{
 {{2},{2},\cdots,\cdots,\cdots,\cdots,2},
},\qquad S_L=\YT{0.2in}{}{
 {{1},{1},\cdots,\cdots,\cdots,1},
 }\in SpT_2;\\\nonumber
 \\
n=2:~~&S^H=\YT{0.2in}{}{
 {{2},{2},\cdots,\cdots,\cdots,\cdots,2},
 {{3},\cdots,\cdots,\cdots,\cdots,{3}},
},\qquad S_L= \YT{0.2in}{}{
 {{1},{1},\cdots,\cdots,\cdots,1},
 {{4},\cdots,\cdots,\cdots,{4}},
 }\in SpT_4\label{2symphw-lw2}\\ \nonumber
 \\
 n=3:~~&S^H=\YT{0.2in}{}{
 {{2},{2},2,\cdots,\cdots,\cdots,\cdots,2},
 {{3},3,\cdots,\cdots,\cdots,\cdots,{3}},
 {{6},\cdots,\cdots,\cdots,\cdots,{6}},
},\qquad S_L= \YT{0.2in}{}{
 {{1},{1},1,\cdots,\cdots,\cdots,1},
 {{4},4,\cdots,\cdots,\cdots,{4}},
 {{5},\cdots,\cdots,\cdots,{5}},
 }\in SpT_6\label{3symphw-lw2}
\end{align}


It would be interesting to have an explicit characterization  of $SST^{\mathfrak{k}-hw}
_{2n}(\lambda)$ or $SST^{\mathfrak{k}-lw}
_{2n}(\lambda)$. This is known for $n=1, 2$ \cite{nsw}. One has an algorithm provided by our results to compute the elements of those sets.

From the quantum Littlewood-Richardson bijection \eqref{wat0}, we see that for each $\mu\in Par_{\le n}$, the recording tableaux $Rec_{2n}(\lambda/\mu)\overset{\sim}\rightarrow LRS_{2n}(\lambda/\mu)$ determine the pairs consisting of $\mathfrak{k}$-highest  respectively $\mathfrak{k}$-lowest weight tableaux $S^{H,\mu}$ and $ S_{L,-\mu} \in
SST_{2n}(\lambda)$ where $wt_{\mathfrak{k}}(S^{H,\mu})=\mu$  respectively $wt_{\mathfrak{k}}(S_{L,-\mu})=-\mu$. That is, the set $Rec_{2n}(\lambda/\mu)$ determine the tableaux in
$SST^{\mathfrak{k}-hw}
_{2n}(\lambda)$  respectively $SST^{\mathfrak{k}-lw}
_{2n}(\lambda)$) such that $wt_{\mathfrak{k}}(T)=\mu$  and respectively $wt_{\mathfrak{k}}(T)=-\mu$. Consequently, our  algorithm    showing explicitly the surjectivity of $LR^{AII}$ in \eqref{Rtilde} provides  an algorithm for ${\LRAII^{AII}}^{-1}$ to compute those sets.

\begin{prop} \label{prop:hwlw} Let $S^{H,\mu}, S_{L,-\mu}\in SpT_{2n}(\mu)$ be the symplectic $\mathfrak{k}-hw$ respectively $\mathfrak{k}-lw$ weight tableaux of $SpT_{2n}(\mu)$. Then
\begin{enumerate}
\item
 ${\LRAII^{AII}}^{-1}(\{S^{H,\mu} \}\times Rec_{2n}(\lambda/\mu))= \{S\in    SST^{\mathfrak{k}-hw}
_{2n}(\lambda)| wt_{\mathfrak{k}}(S)=\mu\} \subseteq SST_{2n}(\lambda)$.

\item ${\LRAII^{AII}}^{-1}(\{S_{L,-\mu} \}\times Rec_{2n}(\lambda/\mu))=    \{S\in    SST^{\mathfrak{k}-lw}
_{2n}(\lambda)| wt_{\mathfrak{k}}(S)=-\mu\} \subseteq SST_{2n}(\lambda)$.

\end{enumerate}
Therefore \begin{align}SST^{\mathfrak{k}-hw}
_{2n}(\lambda)=\bigsqcup_{\begin{smallmatrix}\mu\in Par_{\le n}\\
\mu\subseteq\lambda\end{smallmatrix}} {\LRAII^{AII}}^{-1}(\{S^{H,\mu}\}\times Rec_{2n}(\lambda/\mu))
\end{align}
and
\begin{align}SST^{\mathfrak{k}-lw}
_{2n}(\lambda)=\bigsqcup_{\begin{smallmatrix}\mu\in Par_{\le n}\\
\mu\subseteq\lambda\end{smallmatrix}} {\LRAII^{AII}}^{-1}(\{S_{L,-\mu}\}\times Rec_{2n}(\lambda/\mu))
\end{align}
\end{prop}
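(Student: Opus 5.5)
The plan is to derive Proposition \ref{prop:hwlw} directly from the bijectivity of the quantum LR map \eqref{wat0}, using the definition of $\mathfrak{k}$-highest (lowest) weight tableaux through $P^{AII}$.

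\emph{First step: the two inclusions in (1).} Fix $\mu\in Par_{\le n}$ with $\mu\subseteq\lambda$.

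For the inclusion $\subseteq$, take $Q\in Rec_{2n}(\lambda/\mu)$ and set $S:={\LRAII^{AII}}^{-1}(S^{H,\mu},Q)\in SST_{2n}(\lambda)$, as computed by Theorem \ref{cor:generalhole1}. Since $LR^{AII}$ is a bijection, $LR^{AII}(S)=(S^{H,\mu},Q)$, so $P^{AII}(S)=S^{H,\mu}$. This tableau has the form on the LHS of \eqref{symphw-lw}, so $S$ is a $\mathfrak{k}$-highest weight tableau.

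For its $\mathfrak{k}$-weight, I would invoke the fact from \cite{nsw} that $P^{AII}$ preserves the $\mathfrak{k}$-weight, i.e.\ $\wt_\mathfrak{k}(S)=\wt_\mathfrak{k}(P^{AII}(S))$. This is the $\imath$crystal compatibility of $LR^{AII}$: the recording tableau carries no $\mathfrak{k}$-weight and the insertion tableau is a $\mathfrak{k}$-crystal morphism image. Granting it, $\wt_\mathfrak{k}(S)=\wt_\mathfrak{k}(S^{H,\mu})=(S^{H,\mu}[u_1],\dots,S^{H,\mu}[u_n])=\mu$, by the computation displayed just before \eqref{symphw-lw}.

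For the inclusion $\supseteq$, take $S$ in $SST^{\mathfrak{k}-hw}_{2n}(\lambda)$ with $\wt_\mathfrak{k}(S)=\mu$. Then $P^{AII}(S)$ is a symplectic tableau of the LHS form, of some shape $\mu'\in Par_{\le n}$. A symplectic $\mathfrak{k}$-highest weight tableau is uniquely determined by its shape, and its shape equals its $\mathfrak{k}$-weight. Weight preservation then gives $\mu'=\mu$, so $P^{AII}(S)=S^{H,\mu}$. Setting $Q:=Q^{AII}(S)\in Rec_{2n}(\lambda/\mu)$ by \eqref{rec=rec2}, we get $S={\LRAII^{AII}}^{-1}(S^{H,\mu},Q)$.

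\emph{Second step: part (2).} This is identical, using the RHS of \eqref{symphw-lw}. Its $\mathfrak{k}$-weight is $(-S[v_1],\dots,-S[v_n])=-\mu$, with shape $w_0$ applied to it, that is, $\mu$.

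\emph{Third step: the disjoint union formulas.} These follow by taking the union over $\mu\in Par_{\le n}$ with $\mu\subseteq\lambda$. Every $S\in SST^{\mathfrak{k}-hw}_{2n}(\lambda)$ has $P^{AII}(S)$ of some shape $\mu\subseteq\lambda$ lying in $Par_{\le n}$, again by \eqref{wat0}. The pieces are disjoint because distinct $\mu$ give distinct $P^{AII}$, equivalently distinct $\mathfrak{k}$-weights, and $LR^{AII}$ is injective.

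\emph{Main obstacle.} The substantive point is the weight compatibility $\wt_\mathfrak{k}(S)=\wt_\mathfrak{k}(P^{AII}(S))$, which is not stated earlier in this paper. I would cite it from \cite{nsw} and \cite{watanabe}, where $LR^{AII}$ is shown to be an isomorphism of $\imath$crystals with trivial structure on the recording factor. Failing a direct citation, I would argue it only for the two extremal tableaux: check that each step of \eqref{Rtilde}, namely reverse column insertion, $\rm{red}^{-1}$ and plactic concatenation, changes the multiset of entries only by adding pairs $\{u_k,v_k\}$. This holds for $\rm{red}^{-1}$, since by Proposition \ref{prop:rem}(3) removed entries come in pairs $\{x,s(x)\}$, which are exactly such $u_k,v_k$ pairs. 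Reverse column insertion and concatenation preserve the content. Hence $\wt_\mathfrak{k}$ is preserved along the algorithm.
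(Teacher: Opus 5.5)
Your proposal is correct and is essentially the paper's argument: the paper obtains the proposition directly from the bijection \eqref{wat0} and the definition of $\mathfrak{k}$-highest (lowest) weight tableaux through $P^{AII}$, leaving the compatibility $\wt_\mathfrak{k}(S)=\wt_\mathfrak{k}(P^{AII}(S))$ from \cite{nsw} implicit. Your fallback check of that compatibility is a useful addition the paper does not spell out: since $\{u_k,v_k\}=\{2k-1,2k\}=\{x,s(x)\}$, Proposition \ref{prop:rem}(3) (with $\redu$ deleting exactly the removable entries) shows that $\redu^{-1}$ only adds such pairs, and reverse insertion and plactic concatenation preserve content, so the algorithm of Theorem \ref{cor:generalhole1} preserves $\wt_\mathfrak{k}$ for every $S$, not only for the extremal ones.
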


Let $\mu\subseteq_{vert}\lambda$, $t_0$ the slack of the vertical strip $\lambda/\mu$ and $\r$ the corresponding  slack row index vector. Let $\mu'=\mu-\delta_\mathbf{r}$ and note $\lambda=\mu'+\varpi_{\ell(\lambda)}$.
For  $S$  the $\mathfrak{k}$-highest or lowest weight tableau
in $SpT_{2n}(\mu)$,  Theorem \ref{thm:verygeneralstrip} has a nice assertion.

\begin{cor}\label{lem:H-L}Let $S^{H,\mu}$ be the $\mathfrak{k}$-highest weight tableau and $S_{L,-\mu}$ be  $\mathfrak{k}$-lowest weight tableau $S_{L,-\mu}$ 
in $SpT_{2n}(\mu)$ as in  \eqref{2symphw-lw2}.
Let $Q\in Rec_{2n}(\lambda/\mu)$ with slack row index  vector $ \mathbf{r}=(r_1,\dots, r_{t_0})$. Let $u_\r=(u_{r_1},\dots, u_{r_{t_0}})$ and $v_\r=(v_{r_1},\dots, v_{r_{t_0}})$ sequence of positive numbers in [1,2n] as in \eqref{numbers:u}.
Then,
\begin{enumerate}
\item
  ${LR^{AII}}^{-1}(S^{H,\mu},Q)$ returns the following
  $\mathfrak{k}$-highest  weight tableau in
$SST_{2n}(\lambda)$ with ${\mathfrak{k}}$-weight  $\mu$:

\begin{align}\label{TUH}(T^u_0(u_{r_1})T^u_1\cdots (u_{r_{t_0}})T^u_{t_0})S^{H,\mu'}\in SST_{2n}(\lambda)
\end{align}  where  $S^{H,\mu'}$ is the  $\mathfrak{k}$-highest in  $SpT_{2n}(\mu')$
and $T^u_0(u_{r_1})T^u_1\cdots (u_{r_{t_0}})T^u_{t_0}\in SST_{2n}(\varpi_{\ell(\lambda)})$ is  given by Theorem 2 in \cite{azreduction}
\item  ${LR^{AII}}^{-1}(S_{L,-\mu},Q)$ returns the
  $\mathfrak{k}$-lowest  weight tableau in
$SST_{2n}(\lambda)$ with ${\mathfrak{k}}$-weight  $-\mu$:

\begin{align}\label{TVL}(T^v_0(v_{r_1})T^v_1\cdots(v_{r_{t_0}}) T^v_{t_0})S_{L,\mu'}\in SST_{2n}(\lambda)
\end{align}  where  $S_{L,-\mu'}$ is the  $\mathfrak{k}$-lowest weight tableau in  $SpT_{2n}(\mu')$
and $T^v_0(v_{r_1})T^v_1\cdots(v_{r_{t_0}})T^v_{t_0}\in SST_{2n}(\varpi_{\ell(\lambda)})$, given  by Theorem 2 in \cite{azreduction}
\end{enumerate}

\end{cor}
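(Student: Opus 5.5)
The plan is to specialize Theorem \ref{thm:verygeneralstrip} to $S=S^{H,\mu}$ and $S=S_{L,-\mu}$, which reduces the statement to two computations.

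\emph{Step 1: compute $(\underset{\r}\leftarrow S)$.} Take $S=S^{H,\mu}$; its row $i$ is constant, equal to $u_i$. Apply the reverse column insertion to the cells of $\mu/\mu'$, from the largest slack row index $r_{t_0}$ to the smallest $r_1$. The entry removed from the end of row $r_j$ is $u_{r_j}$. Since every row of $S$ is constant and the rows strictly increase downward, the reverse bumping route of this entry moves straight left along row $r_j$. It therefore ejects the entry $u_{r_j}$ from the first column and leaves row $r_j$ constant, one cell shorter. Doing this for $j=t_0,\dots,1$ gives
$$S'_\r=(u_{r_1},\dots,u_{r_{t_0}})\quad\text{and}\quad S'_{\ge 2}=S^{H,\mu'}.$$
The second identity holds because $S'_{\ge2}$ still has constant rows $u_i$ and shape $\mu'=\mu-\delta_\r$. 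Theorem \ref{thm:verygeneralstrip} guarantees $S'_{\ge 2}\in SpT_{2n}(\mu')$. The lowest weight case is identical with $v_i$ in place of $u_i$, giving $S'_\r=v_\r$ and $S'_{\ge2}=S_{L,-\mu'}$. The one point to check is that a bumped entry never disturbs the other rows. This holds because the rows are constant and strictly increasing downward, so each reverse bump stays in its own row. This follows from the bumping-route facts cited from \cite{azreco} and \cite{fulton}.

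\emph{Step 2: apply $\rm{red}^{-1}_{t_0}$.} The column $u_\r$ (respectively $v_\r$) is a subcolumn of the first column of $S^{H,\mu}$ (respectively $S_{L,-\mu}$). It is therefore a symplectic column drawn from the pattern $\{u_i\}$ (respectively $\{v_i\}$). This is precisely one of the fundamental patterns treated in Theorem 2 of \cite{azreduction}, which gives
$$\rm{red}_{t_0}^{-1}(u_\r)=T^u_0(u_{r_1})T^u_1\cdots(u_{r_{t_0}})T^u_{t_0}\in SST_{2n}(\varpi_{\ell(\lambda)}),$$
and similarly $\rm{red}_{t_0}^{-1}(v_\r)=T^v_0(v_{r_1})T^v_1\cdots(v_{r_{t_0}})T^v_{t_0}$. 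The relations \eqref{conditions}, namely $t_0\le 2n-l$ and $l-t_0\in2\Z$, are exactly the hypotheses that Theorem 2 of \cite{azreduction} requires. Concatenating with $S^{H,\mu'}$ (respectively $S_{L,-\mu'}$), as in \eqref{inverse}, gives \eqref{TUH} and \eqref{TVL}, and Theorem \ref{thm:verygeneralstrip} places both in $SST_{2n}(\lambda)$.

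\emph{Step 3: the $\mathfrak{k}$-highest and lowest weight property.} This part is formal. The output equals ${LR^{AII}}^{-1}(S^{H,\mu},Q)$, so its $P^{AII}$ is $S^{H,\mu}$. By the definition and Proposition \ref{prop:hwlw}, it is therefore $\mathfrak{k}$-highest weight with $\mathfrak{k}$-weight $\mu$. The lowest weight case gives $\mathfrak{k}$-weight $-\mu$ in the same way.

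The main obstacle is matching $u_\r$ and $v_\r$ to the exact hypotheses of Theorem 2 in \cite{azreduction}, in particular the parity and consecutiveness conditions of the pattern. To handle it, I would first check with Proposition \ref{prop:rem} that reinserting the removed pairs $\{x,s(x)\}$ into gaps between consecutive $u_{r_j}$ yields the blocks $T^u_j$, as in Example \ref{ex:qq}.
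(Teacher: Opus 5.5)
Your proposal is correct and follows the paper's proof. You specialize Theorem \ref{thm:verygeneralstrip}, use $(\underset{\mu/\mu'}\leftarrow S^{H,\mu})=(u_\r,S^{H,\mu'})$, and apply Theorem 2 of \cite{azreduction} to $u_\r$ and $v_\r$; your constant-row argument for the reverse bumping route fills in a step the paper only asserts. The obstacle you flag, that $u_\r$ and $v_\r$ satisfy the pattern hypotheses of that theorem, is settled in the paper by citing Lemma 3 of \cite{azreduction} rather than by a direct check via Proposition \ref{prop:rem}.
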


\begin{proof} First note that the sequences of numbers in \eqref{numbers:u} and \eqref{numbers:v} define symplectic columns in the conditions of \cite[Lemma 3]{azreduction}  and therefore  \cite[Theorem2]{azreduction} applies to $u_\r =
(u_{r_1}, \dots , u_{r_{t_0}} )$ and $v_\r = (v_{r_1}, \dots, v_{\r_{t_0 }})$.  Note $\mu'=\mu-\delta_\mathbf{r}\Leftrightarrow \mu=\mu'+\delta_\mathbf{r},$  and $S'_\r=u_\r\in SpT_{2n}(\varpi_{t_0})$ and $\lambda=\mu'+\varpi_l$. Then following the rules in Theorem \ref{thm:verygeneralstrip}, one has

\begin{align}\label{tildeR5}&{LR^{AII}}^{-1}(S^{H,\mu}, Q)= c \circ (\rm{red}_{t_0}^{-1},\rm{id})\circ (\underset{{\mu/\mu'}}\leftarrow S^{H,\mu})\nonumber\\
&=c\circ (\rm{red}_{t_0}^{-1},\rm{id})(u_\r,S^{H,\mu'})\nonumber\\
&=c\circ (\rm{red}_{t_0}^{-1}(u_\r),S^{H,\mu'})\nonumber\\
&=c\circ (T^u_0(u_{r_1})T^u_1\cdots (u_{r_{t_0}})T^u_{t_0},S^{H,\mu'}) \mbox{ by \cite[Theorem2]{azreduction}}\\
&=(T^u_0(u_{r_1})T^u_1\cdots (u_{r_{t_0}})T^u_{t_0})S^{H,\mu'}\in SST_{2n}(\lambda)
\end{align}

The proof for $S_{L,-\mu}$ is similar.
\end{proof}

This result is illustrated for $n=6$ in Example \ref{ex:qq}.
\begin{obs}
Given  $\mu\subseteq_{vert}\lambda$, the data given by the slack number $t_0$ and the slack row index vector $\r$ of $\lambda/\mu$ completely characterizes the pair of $\mathfrak{k}$-highest, -lowest weight tableaux in $SST_{2n}(\lambda)$ with $\mathfrak{k}$- weights $\mu$, -$\mu$).
That is, for $\mu\subseteq_{vert}\lambda$,  $S\in    SST^{\mathfrak{k}-hw}_{2n}(\lambda)$ ($SST^{\mathfrak{k}-lw}_{2n}(\lambda)$)  with $wt_{\mathfrak{k}}(S)=\mu$ ($wt_{\mathfrak{k}}(S)=-\mu$) if and only if $S=$\eqref{TUH} ($=$\eqref{TVL}).

The procedure is very easy: given $S^{H,\mu'}\in SpT_{2n}(\mu')$ and $0\le k\le n $ choose a subset $u=(u_{r_1},\dots, u_{r_k})\subseteq \{u_i\}_{k=1}^n$ \eqref{numbers:u} such that $\mu=\mu'+\delta_\r\in Par_{\le n}$ with $\r=({r_1},\dots, {r_k})\subseteq [1,n]$. Then $u=(u_{r_1},\dots, u_{r_k})\in SpT_{2n}(\varpi_k)$  and
\begin{align}
\redu_{k}^{-1}(u).S^{H,\mu'}\in SST^{\mathfrak{k}-hw}_{2n}(\lambda) \mbox{ with $\mathfrak{k}$-weight $\mu$ }
\end{align}

such that  $\lambda=\mu'+\varpi_l$ where $k\le l\le 2n, ~2n-l$ and  $l-k\in 2\Z$.
\end{obs}
\begin{ex} Let $n=3$ and $S'=\YT{0.13in}{}{
 {2,2,2,2},
 {3,3},
}\in SpT_6(\mu')$ and $u=\{6\}\subseteq \{u_1=2,u_2=3,u_3=6\}$, $\r=\{3\}$. One has $\mu=\mu'+\delta_\r=(4,2,1)=(3,3,1,1)$.
Then $\redu_1^{-1} (6)=(12346)$ and $$S=(12346)\bigcdot S'=\YT{0.13in}{}{
 {1,2,2,2,2},
 {2,3,3},
{3},
{4},
{6},
}\in SST^{\mathfrak{k}-hw}_{2\times 3}(\lambda), ~~l=5, ~~wt_{\mathfrak{k}}(S)=\mu$$
where $\lambda=\mu'+\varpi_5$ and $1=k\le l\le 6, ~6-5$ and  $l-k=5-1=4\in 2\Z$
\end{ex}

From Lemma \ref{lem:recordhole0} it follows
\begin{cor} \label{cor:n=1}Let $S_1$ a $\mathfrak{k}$-highest weight tableau  and $S_2$ a $\mathfrak{k}$-lowest weight tableau in $SpT_2(u)$, with $(u)$ a one-row partition, as in  \eqref{1symphw-lw2}. Let $Q\in  Rec_{2}(\lambda/(u))$ of weight $\nu^t$.
Then for $n=1$, the $\mathfrak{k}$-highest (-lowest) weight tableaux in
$SST_{2}(\lambda)$ are respectively the hook-shape tableaux
\begin{align}&Y(\nu).S_1,\quad \YT{0.13in}{}{
 {1,\cdots,1,{2},{2},\cdots,\cdots,\cdots,2},
{2,\cdots,2},
}\mbox{ and }\\
& Y(\nu).S_2,\quad \YT{0.13in}{}{
 {1,\cdots,1,{1},{1},\cdots,\cdots,\cdots,1},
{2,\cdots,2},
}
\end{align}
with ${\mathfrak{k}}$-weights respectively $(u)$ and $(-u)$.
\end{cor}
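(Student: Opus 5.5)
For $n=1$ I would show that the slack sequence of every $Q\in Rec_2(\lambda/(u))$ is the null vector, so that Lemma \ref{lem:recordhole0} applies directly. By Remark \ref{re:slack}(2), for $n=1$ condition $(c)$ of Proposition \ref{prop:slack} forces $Q[i]=2$ and $t_0^{(i)}=0$ for all $1\le i\le N$. Hence $\lambda=(u)+\sum_{i=1}^N\varpi_2=(u)+N\varpi_2$ with $N=\nu_1$. The weight of $Q$ is $\nu^t=(2^N)$, so $\nu=(N,N)$, an even partition, and $Y(\nu)$ is the two-row Yamanouchi tableau with $N$ ones in row one and $N$ twos in row two. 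We also need $Q[N]\ge \ell((u))$. This holds since $\ell((u))\le 1\le 2$, which is the length hypothesis of Lemma \ref{lem:recordhole0}. Taking $\mu=(u)$ there gives
\[
{\LRAII^{AII}}^{-1}(S_j,Q)=Y(\nu)\bigcdot S_j,\qquad j=1,2.
\]

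Next I would compute the plactic products explicitly. $S_1$ is the one-row word $2^u$ and $S_2$ is $1^u$. Column-inserting, or equivalently row-inserting the row word of $S_j$ into $Y(\nu)$, goes as follows. Each $2$ appended to the first row of $Y(\nu)=\begin{smallmatrix}1^N\\2^N\end{smallmatrix}$ simply extends that row, giving first row $1^N2^u$ and second row $2^N$. Each $1$ bumps nothing out of row one, because no entry exceeds $1$ in a row of $1$'s, so it too extends row one, giving first row $1^{N+u}$ and second row $2^N$. Both results have shape $(N+u,N)=\lambda$, which is a hook-like two-row shape, matching the displayed tableaux. As a sanity check, the content agrees with the multiset union of $Y(\nu)$ and $S_j$, and the products are semistandard, hence they are the plactic products.

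Finally I would read off the $\mathfrak{k}$-weights. For $n=1$ we have $u_1=2$ and $v_1=1$, so by \eqref{kweight} $\wt_{\mathfrak k}=S[2]-S[1]$. For the first tableau this is $(N+u)-N=u$, and for the second it is $N-(N+u)=-u$. This is consistent with Proposition \ref{prop:hwlw}, which identifies these outputs as exactly the $\mathfrak{k}$-highest (resp. lowest) weight tableaux of $\mathfrak{k}$-weight $(u)$ (resp. $(-u)$). Taking the union over $u$ via Proposition \ref{prop:hwlw} describes all $\mathfrak{k}$-highest and lowest weight tableaux in $SST_2(\lambda)$.

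The only step needing care is checking the hypotheses of Lemma \ref{lem:recordhole0}, namely the null slack and $Q[N]\ge\ell(\mu)$. Remark \ref{re:slack} settles the first, and $\ell((u))\le1$ settles the second. The rest is routine insertion.
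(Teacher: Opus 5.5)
Your proposal is correct and follows the paper's own route: the paper derives this corollary directly from Lemma~\ref{lem:recordhole0}. You supply the details it leaves implicit: Remark~\ref{re:slack}(2) forces $Q[i]=2$ and null slack for $n=1$, so $\lambda=(u+N,N)$ and $\nu=(N,N)$; the explicit two-row insertion gives $Y(\nu)\bigcdot S_j$; and the $\mathfrak{k}$-weights follow from $u_1=2$, $v_1=1$. Your closing appeal to Proposition~\ref{prop:hwlw} then correctly turns this into a description of all $\mathfrak{k}$-highest and lowest weight tableaux in $SST_2(\lambda)$.
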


From Proposition \ref{prop:slack}, Remark \ref{re:slack} and   Lemma \ref{lem:recordhole1} it follows

\begin{cor}\label{cor:n=2}Let $n=2$. Consider $S^H$  the $\mathfrak{k}$-highest weight tableau
in $SpT_4(\mu)$ as in LHS of \eqref{2symphw-lw2}. Let $Q\in  Rec_{4}(\lambda/\mu)$.
Then, 
\begin{enumerate}
\item the slack sequence and the corresponding  slack row index vector sequence of $Q$ are of the form  $(1,\dots,1,0,\dots,0)$, respectively
 $(1,\cdots,1,2,\cdots,2,3,\cdots,3,(),\dots,())$, and
  \item ${\LRAII^{AII}}^{-1}(S^H,Q)$ returns   a $\mathfrak{k}$-highest  weight tableau in
$SST_{4}(\lambda)$ with ${\mathfrak{k}}$-weight  $\mu$    as   described in \cite[Lemma 6.2,(c)]{nsw}:
\begin{align}\label{type1}
&\YT{0.15in}{}{
 {1,\cdots,1,1,\cdots,1,1,\cdots,1,{1},\cdots,1,2,\cdots,2,\mathbf{2},\cdots,\mathbf{2}},
 {2,\cdots,2,2,\cdots,2,2,\cdots,2,{2},\cdots,2,{\mathbf{3}},\cdots,\mathbf{3}},
 {3,\cdots,3,\mathbf{3},\cdots,\mathbf{3},\mathbf{4},\cdots,\mathbf{4}},
 {4,\cdots,4},
} \quad 
\\
&\mbox{ or }\nonumber\\
\label{type2}
&\YT{0.15in}{}{
 {1,\cdots,1,1,\cdots,1,1,\cdots,1,{2},\cdots,2,2,\cdots,2,\mathbf{2},\cdots,\mathbf{2}},
 {2,\cdots,2,2,\cdots,2,2,\cdots,2,{3},\cdots,{3},\mathbf{3},\cdots,\mathbf{3}},
 {3,\cdots,3,\mathbf{3},\cdots,\mathbf{3},\mathbf{4},\cdots,\mathbf{4},{4},\cdots,{4}},
 {4,\cdots,4},
}
\end{align}
These tableaux satisfy inequalities  on the multiplicities 
of the following columns: if $x$ is the multiplicity of $\redu^{-1}(3)=(1,2,3)$, $y$ the multiplicity of $\redu^{-1}(4)=(1,2,4)$, $z$ the multiplicity of $(2,3)\in SpT_4(\varpi_2)$ and w the multiplicity of $(2)\in SpT_4(\varpi_1) $, then

$$x\le w \mbox{ and } y\le z.$$
Tableaux of type \eqref{type1} are produced with slack vectors where $\#3$'s$>\#1$'$\ge 0$, and those of type \eqref{type2} with slack vectors where $0\le\#3$'s$\le\#1$'.
\end{enumerate}
\end{cor}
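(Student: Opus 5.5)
The plan is to reduce $n=2$ to the constant-slack case of Theorem \ref{lem:recordhole1}, run the one-strip step of Corollary \ref{lem:H-L} repeatedly on $S^H$, and then read off the output.

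\emph{Step 1: slack data.} For part (1), Remark \ref{re:slack}(2) with $n=2$ gives $t_0^{(i)}\in\{0,1\}$ for every $i$, and $t_0^{(i)}=1$ forces $Q[i]=2$ and $\ell(\mu^{(i-1)})=3$. Proposition \ref{prop:slack}(b) makes the slack sequence weakly decreasing in the order $(t_0^{(N)},\dots,t_0^{(1)})$, so it has the form $(1,\dots,1,0,\dots,0)$. By Proposition \ref{prop:slack}(d) the nonempty singletons $\r_i$ satisfy $\r_N\le\cdots\le\r_j$. By (e), $\r_i\le\ell(\mu^{(i)})\le 3$; the bound $\le 3$ holds because $\ell(\mu^{(i-1)})=3$ and $4\notin\r^{(i)}$. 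Hence the slack vector sequence is $(1,\dots,1,2,\dots,2,3,\dots,3,(),\dots,())$.

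\emph{Step 2: computing the output.} Since $\mu\in Par_{\le 2}$, we write $S^H$ with rows of $2$'s and $3$'s. We then apply Theorem \ref{lem:recordhole1}(2), i.e. iterate the one-strip step of Corollary \ref{lem:H-L}. In each step with slack $1$ and $\r_i=r$, the bumped entry is the first-column entry in row $r$ of the current tableau. The reverse reduction $\redu_1^{-1}$ of that entry is a length-$3$ column: $\redu^{-1}(2)=(1,2,3)$ if $r=1$, and $\redu^{-1}(3)=(1,2,4)$ or $\redu^{-1}(4)$-type columns otherwise, by \cite[Theorem 2]{azreduction}. In each step with slack $0$, Remark \ref{nullslack} prepends the column $(1,2)$ or $(1,2,3,4)$.

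The main obstacle is the intermediate tableaux. After the first step they are no longer $S^{H,\mu'}$: they acquire $1$'s and $4$'s, so the bumped entries must be tracked explicitly. I would argue by induction on the number of strips processed. The induction hypothesis is that the current tableau has the shape pattern of \eqref{type1} or \eqref{type2}, with the bold entries recording the $S^H$ residue. Using $\r_N\le\cdots\le\r_j$, reverse column insertion along rows $1$, then $2$, then $3$ always bumps the leftmost entry of a bold block. Concretely, row $1$ bumps a $2$, row $2$ bumps a $3$, and row $3$ bumps a $3$ or $4$ from the third row. Concatenating in the plactic monoid then inserts the new column at the left, and this is checked to preserve the pattern.

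\emph{Step 3: identification and inequalities.} The type depends on whether the row-$3$ steps outnumber the row-$1$ steps. When $\#3>\#1$, the bumped $3$'s from row $3$ reach original bold entries, which gives \eqref{type1}; otherwise we get \eqref{type2}. Counting columns gives the stated relations. First, $x\le w$: each $(1,2,3)$ column comes from a bumped $2$ in row $1$, so $x$ is at most the number of row-$1$ $2$'s. Second, $y\le z$: this follows similarly from row-$3$ bumps of $3$'s lying under $2$'s. Comparing these counts with \cite[Lemma 6.2(c)]{nsw} finishes the proof. By Proposition \ref{prop:hwlw} the output is automatically $\mathfrak{k}$-highest of weight $\mu$, so only its explicit form needs checking.
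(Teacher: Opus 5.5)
Your Step 1 is correct and is how the paper gets part (1). Remark \ref{re:slack} gives $t_0^{(i)}\in\{0,1\}$, with $Q[i]=2$ whenever $t_0^{(i)}=1$. Proposition \ref{prop:slack}(b),(d),(e) then give the form of the slack and slack vector sequences.

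Steps 2--3, however, rest on false statements.
\begin{itemize}
\item Your reverse reductions are misassigned. As listed in Example \ref{n=2}, and checkable with Proposition \ref{prop:rem}, $\redu^{-1}(2)=(2,3,4)$, $\redu^{-1}(3)=(1,2,3)$ and $\redu^{-1}(4)=(1,2,4)$. So a row-$1$ step ejects a $2$ and prepends a column $(2,3,4)$. The columns $(1,2,3)$ come from ejected $3$'s, produced by row-$2$ and row-$3$ steps.
\item The ejected entry is not always the first-column entry of row $r$. Passing from $S^{1,1,2}$ to $S^{1,1,2,2}$ in Example \ref{n=2}(2), the route from the end of row $2$ carries a $3$ into the first column $(1,2,3)$. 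There the largest entry $\le 3$ sits in row $3$, so the route drops a row and ejects $3$, although the row-$2$ entry of that column is $2$.
\item Decisively, your counting in Step 3 treats the output's columns as the prepended columns, but later routes rewrite earlier columns. A row-$2$ route turns a $(2,3)$ into $(2)$. A row-$3$ route strips the bottom entry of the rightmost column of length $3$, and may turn a $(1,2,3)$ into a $(1,2,4)$ on its way. In Example \ref{n=2}(2) five columns $(1,2,3)$ are prepended, yet the output has $x=2$.
\end{itemize}
So neither ``$x$ is at most the number of row-$1$ $2$'s'' nor your sketch for $y\le z$ is an argument. Your explanation of the type switch is also not the actual mechanism. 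Let $a$ and $c$ be the numbers of $1$'s and $3$'s in the slack vector sequence. Then:
\begin{itemize}
\item the row-$1$ steps create exactly $a$ columns $(2,3,4)$, and row-$2$ steps leave them untouched;
\item each row-$3$ step turns one of them into $(2,3)$ while they last;
\item once they are exhausted, each row-$3$ step turns a column $(1,2,3)$ or $(1,2,4)$ into $(1,2)$.
\end{itemize}
This is why $c>a$ yields \eqref{type1} and $c\le a$ yields \eqref{type2}.

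You also do not need the explicit induction for the form of the output. The paper obtains (2) from the iteration in Theorem \ref{lem:recordhole1}. By Proposition \ref{prop:hwlw}, the result is a $\k$-highest weight tableau in $SST_4(\lambda)$ of $\k$-weight $\mu$, and its form is read off from \cite[Lemma 6.2(c)]{nsw}, which describes these tableaux for $n=2$. The correspondence between slack vectors and the two types is supported by the route computations in Example \ref{n=2}. If you want a self-contained proof instead, redo the induction with the correct reverse reductions and the column bookkeeping above. That includes the row-drop in the first column for repeated row-$2$ steps, and the fact that a row-$3$ step ejects $4$ rather than $3$ when no column $(1,2,3)$ is present.
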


\begin{ex}\label{n=2}Let $n=2$. We illustrate Corollary \ref{cor:n=2}.
One has
$\redu_1^{-1}(2)=(234)$, $\redu_1^{-1}(3)=(123)$, $\redu_1^{-1}(4)=(124)$, $\redu_1^{-1}(())=(1,2,3,4)$. our tableaux below comprise these columns and the symplectic columns $(2)$, $(2,3)$ and the column $(1,2)$ obtained either with the operation

$$(\underset{3}\leftarrow (1,2,4))=(4,(1,2))=(1,2,4).(1,2) \mbox{ as in \eqref{movetype}}$$  or $$(\underset{3}\leftarrow (1,2,3))=(3,(1,2))=(1,2,3).(1,2)\mbox{ as in \eqref{small}}$$

\begin{enumerate}
\item Let $\mu=(1,1)\subseteq \lambda=(3,2,1)$,  $S=\YT{0.13in}{}{
 {{2}},
 {{3}},
}\in SpT_4(\mu)$, $\mathfrak{k}$-highest weight, and $~Q=\YT{0.13in}{}{
 {{},2,1},
 {{},1},
{2},
}
\in Rec_4(\lambda/\mu)$ with slack vector sequence $(2,3)$

\begin{align}\nonumber
&\c\circ(\redu^{-1},id)\circ(\underset{2}\leftarrow \YT{0.13in}{}{
 {{2}},
 {{3}},
})
=\YT{0.13in}{}{
 {1,{2}},
 {{2}},
 {3}
}=S^2\\
&\c\circ(\redu^{-1},id)\circ(\underset{3}\leftarrow \YT{0.13in}{}{
 {1,{2}},
 {{2}},
 {3}
})= \YT{0.13in}{}{
 {1,1,{2}},
 {2,{2}},
 {3}
}=S^{2,3}\label{small} \mbox{ of type \eqref{type1}}
\end{align}

\item We expand  the explanation of this example, considered in \cite{azreco}, with slack data. Let $\mu=(4,2)\subseteq \lambda=(10,8,5,1)$ and $S=\YT{0.13in}{}{
 {{2},{2},2,2},
 {{3},{3}},
}\in SpT_4(\mu)$, $\mathfrak{k}$-highest weight, and $$~Q=\YT{0.13in}{}{
 {{},{},,,6,5,4,3,2,1},
 {{},{},8,7,4,3,2,1},
{8,7,6,5,1},
{1},
}\in Rec_4(\lambda/\mu).$$ The  slack vector sequence of $Q$ is $\underline\r=(1,1,2,2,3,3,3,\emptyset)$.

\begin{align*}&\c\circ(\redu^{-1},id)\circ(\underset{1}\leftarrow S)=\YT{0.13in}{}{
 {2,{2},2,2},
 {3,{3},{3}},
 {4},
}=S^1,\quad \\
&\c\circ(\redu^{-1},id)\circ(\underset{1}\leftarrow S^1)=\YT{0.13in}{}{
 {2,{2},2,2},
 {3,3,{3},{3}},
 {4,4},
}=S^{1,1}
\end{align*}
\begin{align*}
 &c\circ(\redu^{-1},id)\circ(\underset{2}\leftarrow S^{1,1})=\YT{0.13in}{}{
 {1,2,{2},2,2},
 {2,3,3,{3}},
 {3,4,4},
}=S^{1,1,2},\quad
\end{align*}
\begin{align*}
&\c\circ(\redu^{-1},id)\circ(\underset{2}\leftarrow S^{1,1,2})=\YT{0.12in}{}{
 {1,1,2,{2},2,2},
 {2,2,3,3},
 {3,3,4,4},
}=S^{1,1,2,2}
\end{align*}
\begin{align*}
&\c\circ(\redu^{-1},id)\circ(\underset{3}\leftarrow S^{1,1,2,2})=\YT{0.13in}{}{
 {1,1,1,2,{2},2,2},
 {2,2,2,3,3},
 {3,3,4,4},
}=S^{1,1,2,2,3}
\end{align*}
\begin{align*}
&\c\circ(\redu^{-1},id)\circ(\underset{3}\leftarrow S^{1,1,2,2,3})=\YT{0.13in}{}{
 {1,1,1,1,2,{2},2,2},
 {2,2,2,2,3,3},
 {3,3,4,4},
}=S^{1,1,2,2,3,3} \label{movetype0}
\end{align*}\mbox{up to here we have been in type \eqref{type2}}\\
\begin{align}
&\c\circ(\redu^{-1},id)\circ(\underset{3}\leftarrow S^{1,1,2,2,3,3})=\YT{0.13in}{}{
 {1,1,1,1,1,2,{2},2,2},
 {2,2,2,2,2,3,3},
 {3,3,4,4},
}=S^{1,1,2,2,3,3,3}\\\label{movetype}
&\mbox{ we have moved to  type \eqref{type1}}
\end{align}
\begin{align*}&\c\circ(\redu^{-1},id)\circ(\underset{()}\leftarrow S^{1,1,2,2,3,3,3})=\YT{0.12in}{}{
 {1,1,1,1,1,1,2,{2},2,2},
 {2,2,2,2,2,2,3,3},
 {3,3,3,4,4},
 {4},
}=S^{1,1,2,2,3,3,3,()}\in SST_4(\lambda) \\
& \mbox{ is a $\mathfrak{k}$- highest weight tableau in $SST_4(\lambda)$ with $\mathfrak{k}$-weight $\mu$ of type \eqref{type1}}
\end{align*}
\item  We expand  the explanation of this example, considered in \cite{azreco}, with slack data. Let $\mu=(5,2)$
and $S=\YT{0.13in}{}{
 {{2},{2},2,2,2},
 {{3},{3}},
}\in SpT_4(\mu)$ and
The  slack vector sequence   $\underline\r=(1,1,1,2,2,3,3,())$. Then
$$S^{1,1,1,2,2,3,3,()}=\YT{0.12in}{}{
 {1,1,1,1,1,2,2,{2},2,2},
 {2,2,2,2,2,3,3,3},
 {3,3,3,4,4,4},
 {4},
}$$
is a $\mathfrak{k}$- highest weight tableau in $SST_4(\lambda)$ with $\mathfrak{k}$-weight $\mu$ of type \eqref{type2}.
\end{enumerate}
Expanding the slack vector sequence $(1,1,1,2,2,3,3,())$  with one more $3$ to have the same multiplicity as $1$'s: $(1,1,1,2,2,3,3,3,())$, we still fall on the $\mathfrak{k}$-highest weight tableau  \eqref{type2}:

$$S^{1,1,1,2,2,3,3,3,()}=\YT{0.12in}{}{
 {1,1,1,1,1,1,2,2,{2},2,2},
 {2,2,2,2,2,2,3,3,3},
 {3,3,3,4,4,4},
 {4},
}$$

Expanding with one more $3$ the slack vector has more $3$'s than $1$'s, we change of type  to  fall in type \eqref{type1}. After this the type stabilizes in type \eqref{type1}
$$S^{1,1,1,2,2,3,3,3,3,()}=\YT{0.12in}{}{
 {1,1,1,1,1,1,1,2,2,{2},2,2},
 {2,2,2,2,2,2,2,3,3,3},
 {3,3,3,4,4,4},
 {4},
}$$

\end{ex}

\begin{ex}\label{ex:n=3} For $n=3$, one has
$\redu_1^{-1}(2)=(23456)$, $\redu_1^{-1}(3)=(12356)$, $\redu_1^{-1}(6)=(12346)$, $\redu_1^{-1}(())=(1,2,3,4,5,6)$. Our $\k$-highest weight tableaux  below comprise these columns and the symplectic columns $(2)$, $(2,3)$, $2,3,6$ and the columns  obtained either with the operations

$$(\underset{5}\leftarrow (2,3,4,5,6))=(6,(1,2,3,4,5))=(1,2,3,4,6).(2,3,4,5)$$ 
$$(\underset{5}\leftarrow (1,2,3,5,6))=(6,(1,2,3,5))=(1,2,3,4,6).(1,2,3,5)$$ 
$$(\underset{5}\leftarrow (1,2,3,4,6))=(6,(1,2,3,4))=(1,2,3,4,6).(1,2,3,4)$$

These are the columns that appear in the patterns below

\begin{enumerate}
\item We expand  slack vector the following illustration in \cite{azreco} revealing other patterns as expected. The slack sequence is constant equal to $1$ and $()$.

 Let $n=3$.
Let $\mu=(6,4,2)\subseteq \lambda=(10,9,7,6,5,1)$ and $S=\YT{0.13in}{}{
 {{2},{2},2,2,2,2},
 {{3},{3},3,3},
 {6,6},
}\in SpT_6(\mu)$ and
 the  slack row index vector sequence  $\underline\r=(1,1,2,3,5,())$.

One has
$\redu_1^{-1}(2)=(23456)$, $\redu_1^{-1}(3)=(12356)$, $\redu_1^{-1}(6)=(12346)$, $\redu_1^{-1}(())=(1,2,3,4,5,6)$.
\begin{align*}=S^{1,1,2,3,5,()}=\YT{0.12in}{}{
 {1,1,1,1,2,2,{2},2,2,2},
 {2,2,2,2,3,3,3,3,3},
 {3,3,3,3,4,4,6},
 {4,4,4,5,5,5},
{5,6,6,6,6},
{6},
} 
\end{align*}
\mbox{ is a $\mathfrak{k}$- highest weight tableau in $SST_4(\lambda)$ of $\mathfrak{k}$-weight $\mu=(6,4,2)$}

Expanding the slack row index vector sequence  with two and three more $5$'s, so that $\underline\r=(1,1,2,3,5,5,())$ we get  new patterns
\begin{align*}S^{1,1,2,3,5,5,5,()}=\YT{0.12in}{}{
 {1,1,1,1,1,2,2,{2},2,2,2},
 {2,2,2,2,2,3,3,3,3,3},
 {3,3,3,3,3,4,4,6},
 {4,4,4,4,5,5,5},
{5,6,6,6},
{6},
} 
\end{align*}
and  $\underline\r=(1,1,2,3,5,5,5,())$
\begin{align*}S^{1,1,2,3,5,5,5,()}=\YT{0.12in}{}{
 {1,1,1,1,1,1,2,2,{2},2,2,2},
 {2,2,2,2,2,2,3,3,3,3,3},
 {3,3,3,3,3,3,4,4,6},
 {4,4,4,4,4,5,5,5},
{5,6,6,6},
{6},
} 
\end{align*}
After this the type stabilizes in this form. We draw the attention of the reader for the three first rows of this pattern, entry $6$ apart, which correspond to \eqref{type2}.

\item Another admissible pattern are the ones produced by slack sequences with entries equal to two and tails of $1$'s and $()$ as in Example \ref{ex:vectorslackn=3}.

\end{enumerate}
\end{ex}

The exploration of the case $n=3$ reveals interesting properties of the generation of $\k$-highest weights that should be further studied. The dual information of a recording tableau packed on the slack information is ana attempt to get insight.

\bibliography{sample17}
\bibliographystyle{alpha}

\end{document}